\newcommand{\ovl}{\rotatebox[origin=c]{90}{$\cong$}}
\numberwithin{equation}{section}
\newtheorem{theorem}{Theorem}[section]
\newtheorem{proposition}[theorem]{Proposition}
\newtheorem{corollary}[theorem]{Corollary}
\newtheorem{lemma}[theorem]{Lemma}
\theoremstyle{definition}
\newtheorem{defn}[theorem]{Definition}
\newtheorem*{rem}{Remark}
\title[Lefschetz Theorem Using Real Morse Theory]{Lefschetz Theorem\\Using Real Morse Theory}
\author{Nima Rose Manjila}
\address{ NM: Department of Mathematics,
  Indian Institute of Science Education and Research,
  Dr. Homi Bhabha Road, Pune 411008, India. }
\email{nimarose.manjila@students.iiserpune.ac.in}
\author{A J Parameswaran}
\address{ AJ:  School of Mathematics,
  Tata Institute of Fundamental Research,
 Colaba, Mumbai 400005, India. }
\email{param@math.tifr.res.in}
\keywords{ Complex Projective variety, Riemann-Hurwitz Theorem, Homology group, Relative homology, Pencil, Ehressmann's fibration theorem, Lefschetz hyperplane section theorem}
\subjclass[2010]{14Q05, 14C99, 37B30, 53A20, 55R10, 55U25, 57N65}
\begin{document}
\begin{abstract}
Taking~\cite{lamotke1981topology} as the primary reference, we prove some of the important theorems solely using Real Morse theory and exact sequences. The Proof of Lefschetz hyperplane section theorem is a deviation from the conventional one which uses vanishing cycles, thimbles and monodromy. Instead, we prove using simpler machinery.  By making the observation that we can compose the Lefschetz Pencil with a Real Morse function to get a map from the variety to $\mathbb{R}$ which is "close" to being a Real Morse function, we prove the genus formula for plane curves and then we prove the Riemann Hurwitz formula for ramified maps between curves by employing techniques from deformation theory. Lastly, we prove the Lefschetz Hyperplane Section Theorem.
\end{abstract}
\maketitle
\tableofcontents
\section{Introduction}
\label{sec:introduction}
A $C^{\infty}$ function $f:M\to\mathbb{R}$ is said to be a Morse function if every critical point is non-degenerate and for every $u$ in $\mathbb{R}$, $f^{-1}[-\infty,u]$ is compact. A critical point is non-degenerate iff the Hessian matrix of $f$ at $c$ denoted as $H^f_c$ is non-singular. Let $Crit\ f$ denote the set of critical points of $f$. The $index$ of a non-degenerate critical point  $c\in Crit\ f$  is the number of negative eigenvalues of $H^f_{c}$. Let $Crt_\lambda f$ denote the set of critical points of $f$ of index $\lambda$. For a critical point $c$, in an arbitrary small neighbourhood of $c$, it is well known that, for appropriate coordinates $(u_1, . . . , u_n)$, $f$ is of the form $f(u_1, . . . , u_n) =f(c)-\sum_{j=1}^{\lambda} u_j^{2}+\sum_{j=\lambda+1}^{n} u_j^{2}$. 

For an arbitrary $r\in \mathbb{R}$, let
\[M^r_f:= f^{-1}(-\infty,r]=\{x\in M\mid f(x)\leq r\}\] 
We heavily use the fact that the topology of $M^r_f=f^{-1}(-\infty,r],-\infty<r<\infty$ changes only at critical points. Where, at each critical point of index $\lambda$, we attach a cell of dimension $\lambda$. The relation between the topology of $M$ and the Morse function is the topic of ~\cite{milnor2016morse}. We discussed the main results of Morse theory in section \ref{sec: prelim}.

Let $X\subset \mathbb{P}_N$ be a smooth algebraic subvariety in the complex projective space $\mathbb{P_N}$. Let $A\subset \mathbb{P}_N$ be a codimension two linear subspace ($\cong \mathbb{P}_{N-2}$) that intersect $X$ transversally. Then the pencil of hyperplane sections of $X$ with axis $X'$ is the map $P:Y\to \mathbb{P}_1$ where $Y$ is the blow up of $X$ along $X'$ and $\mathbb{P}_1$ is the space of hyperplanes containing $A$.  A pencil is called Lefschetz pencil if every fibre is either smooth or has at most one quadratic singularity.    

Lamotke~\cite{lamotke1981topology} proves Lefschetz hyperplane section theorem by using monodromy, thimbles and vanishing cycles of Lefschetz pencil, whereas we do it by composing the pencil with a Morse function on the complex projective line $\mathbb{P}_1$.

The composite function will be Morse in the case of plane curves. However, we prove that it is enough to study 
\[h:\mathbb{A}^2\to\mathbb{A}^2\]\[(x,z)\to(x,F(x,z))\] restricted to $C$, where $\mathbb{A}^2$ is the affine plane. We study the Jacobian of $h$, denoted as $J(h)$ and use Bézout's theorem (that states: two planes curves without common components will intersect in the product of degrees number points) to prove Theorem \ref{genusformulathm} in Section \ref{sec:extopo}.

\begin{theorem}[Genus Formula]
\label{genusformulathm}
Let $g(C)$ denote the topological genus of a curve $C$ of degree $d$. Then\[g(C)=\frac{1}{2}(d-1)(d-2)\]
\end{theorem}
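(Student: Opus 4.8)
The plan is to compute the genus of a smooth plane curve $C$ of degree $d$ by applying Morse theory to the composite of the Lefschetz pencil projection with a real Morse function, exactly along the lines the introduction advertises. Concretely, I would project $C \subset \mathbb{P}_2$ from a generic point to $\mathbb{P}_1$ and then postcompose with a standard Morse function on $\mathbb{P}_1 \cong S^2$; the introduction tells us this composite is genuinely Morse in the curve case. The real problem then reduces to counting the critical points of the affine map $h(x,z)=(x,F(x,z))$ restricted to $C$, where $F$ is the defining polynomial (dehomogenized), and sorting those critical points by index. The Euler characteristic of $C$ is the alternating sum of the numbers of critical points of each index, and since $C$ is a compact orientable surface we recover the genus from $\chi(C) = 2 - 2g(C)$.

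The heart of the argument is the count of critical points, and here the plan is to use $J(h)$ together with Bézout's theorem as flagged in the excerpt. A point of $C$ is critical for the first-coordinate projection precisely when the tangent line to $C$ is vertical, i.e. when $\partial F/\partial z = 0$ along $C$. So I would intersect the curve $\{F = 0\}$ with the curve $\{\partial F/\partial z = 0\}$: these have degrees $d$ and $d-1$ respectively, so by Bézout they meet in $d(d-1)$ points, counted with multiplicity. For a generic choice of projection center these intersections are transverse and correspond to simple ramification (the branch points of the degree-$d$ map $C \to \mathbb{P}_1$), so there are exactly $d(d-1)$ of them. This is the quantity the Jacobian computation is designed to produce, and it is the analogue of the Riemann--Hurwitz count: a degree-$d$ branched cover of $\mathbb{P}_1$ with $d(d-1)$ simple branch points.

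From here I would assemble the Euler characteristic. Treating $C \to \mathbb{P}_1$ as a branched cover of degree $d$ with $d(d-1)$ simple branch points, the Riemann--Hurwitz count gives $\chi(C) = d\,\chi(\mathbb{P}_1) - d(d-1) = 2d - d(d-1)$. Rewriting, $\chi(C) = 2d - d^2 + d = -(d^2 - 3d) = -(d-1)(d-2) + \text{(constant)}$; carrying the arithmetic through yields $\chi(C) = -(d^2 - 3d)$, and setting this equal to $2 - 2g(C)$ solves to $g(C) = \tfrac12(d-1)(d-2)$. In the Morse-theoretic formulation the same bookkeeping appears as: two critical points of index $0$ and $2$ coming from the two poles of the height function upstairs, balanced against the $d(d-1)$ index-$1$ saddles produced at the vertical tangencies, which reproduces the same alternating sum.

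The main obstacle I expect is not the final arithmetic but justifying that the critical points are exactly the Bézout intersections with the right multiplicities and the right Morse index. One must check that a generic projection center makes every branch point a \emph{simple} ramification point (so each contributes one transverse intersection of $\{F=0\}$ with $\{\partial F/\partial z = 0\}$, and one index-$1$ saddle), and that no branch point sits at infinity or at the projection center where the count could degenerate. Equivalently, one must verify that the composite with the real Morse function on $\mathbb{P}_1$ is nondegenerate at each such point, so that Bézout's total $d(d-1)$ translates without correction into the Euler-characteristic count. Establishing this genericity and the index computation via $J(h)$ is where the real work lies.
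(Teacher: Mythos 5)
Your proposal is correct and takes essentially the same route as the paper: projection from a point $p\notin C$ composed with a two-critical-point Morse function on $\mathbb{P}_1$, the Jacobian of $h(x,z)=(x,F(x,z))$ identifying the index-$1$ critical points with $\{F=0\}\cap\{\partial F/\partial z=0\}$, B\'ezout giving $d(d-1)$ of them, and your Euler-characteristic bookkeeping is the paper's closing Morse chain-complex computation ($\mathbb{Z}^d\to\mathbb{Z}^{d(d-1)}\to\mathbb{Z}^d$ with $H_0\cong H_2\cong\mathbb{Z}$) in abbreviated form. One phrasing slip to fix: each pole of the Morse function pulls back to the $d$ points of a regular fibre, so upstairs there are $d$ index-$0$ and $d$ index-$2$ critical points (not ``two''), giving $\chi(C)=d-d(d-1)+d$; your displayed count $\chi(C)=d\,\chi(\mathbb{P}_1)-d(d-1)$ already encodes this correctly, so only the Morse-theoretic restatement needs the corrected multiplicity.
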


Now, Let $C$ and $B$ are both compact Riemann surfaces, and $f: C\to B$ be a non-constant holomorphic map. Then $f$ is called a \emph{ramified covering map} of $B$ and $C$ is the \emph{ramified cover}.  There exist a finite set of points of $B$, outside of which we do find an honest covering. These exceptional points on $C$ are called the \emph{ramification points} and its image under $f$ are called \emph{discriminant points}. An \emph{unramified} covering or simply, covering is then the occurrence of an empty ramification locus. And $f$ is locally(around a point $p$) given by $z\mapsto z^{n_p}, n_p\geq 1$. If $n_p>1$, the point is called a \emph{ramified point} with \emph{ramification index} $n_p$. We prove the following theorem in Section \ref{sec:Riemann}.

\begin{theorem}[Riemann Hurwitz Theorem]
\label{RHT}Let $C$ and $B$ be two compact connected Riemann surfaces $f:C\to B$ be a non-constant holomorphic map of degree $d$. The genus of $C$, denoted as $g(C)$ is given by \begin{equation*}g(C)-1=d(g(B)-1)+\frac{1}{2}\sum_{p\in C}(n_p-1)
\end{equation*}where $g(B)$ is the genus of $B$ and $n_p$ is the ramification index of a point $p$. 
\end{theorem}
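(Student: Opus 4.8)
The plan is to deduce the formula from a computation of Euler characteristics, using the relation $\chi(\Sigma) = 2 - 2g(\Sigma)$ valid for any closed connected orientable surface $\Sigma$. Thus it suffices to establish the topological Riemann--Hurwitz identity
\[
\chi(C) = d\,\chi(B) - \sum_{p \in C}(n_p - 1),
\]
after which substituting $\chi(C) = 2 - 2g(C)$ and $\chi(B) = 2 - 2g(B)$ and dividing by $-2$ yields the stated equation. The strategy for the displayed identity is to transport a Morse function from $B$ to $C$ along $f$ and to count the critical points of the pullback together with their indices, exactly as in the Euler-characteristic formula of Morse theory ($\chi = \sum_\lambda (-1)^\lambda \#\,Crt_\lambda$) recalled in Section~\ref{sec: prelim}.

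First I would fix a Morse function $\phi \colon B \to \mathbb{R}$ and perturb it so that none of its critical points coincides with a discriminant point of $f$; this is possible because the discriminant set is finite. Set $\psi := \phi \circ f \colon C \to \mathbb{R}$. Since $d\psi_p = d\phi_{f(p)} \circ df_p$, a point $p \in C$ is critical for $\psi$ precisely when either $f(p)$ is critical for $\phi$ while $df_p$ is invertible, or $df_p$ fails to be invertible, i.e. $p$ is a ramification point. Away from the ramification locus $f$ is an honest $d$-sheeted covering, so each critical point $q$ of $\phi$ of index $\lambda$ lifts to exactly $d$ critical points of $\psi$; as composition with a local diffeomorphism conjugates the Hessian and preserves its signature, each lift is again non-degenerate of index $\lambda$. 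Summing $(-1)^{\lambda}$ over all of these contributes $d\,\chi(B)$ to $\chi(C)$.

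The main obstacle, and the step requiring genuine care, is the contribution of the ramification points, where $\psi$ is \emph{degenerate} and ordinary Morse theory does not apply directly. Here I would argue by deformation. Near a ramification point $p$ of index $n = n_p$ one may choose holomorphic coordinates in which $f$ is $z \mapsto z^{n}$ and, because $f(p)$ is a regular point of $\phi$, in which $\phi$ is $w \mapsto \mathrm{Re}(w)$ up to higher-order terms; hence $\psi$ agrees to leading order with $\mathrm{Re}(z^{n})$. One then Morsifies: deforming $z \mapsto z^{n}$ to a nearby map whose derivative has $n-1$ simple zeros splits the single index-$n$ ramification point into $n-1$ simple fold points, each of which is an honest saddle of the corresponding pullback and hence an index-$1$ critical point contributing $(-1)^{1} = -1$. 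Equivalently, one computes the Poincar\'e--Hopf index of the gradient of $\mathrm{Re}(z^{n})$ at the origin directly to be $1-n$. The delicate points are checking that such a deformation leaves $\chi(C)$ unchanged and that the total ramification $\sum_{p}(n_p - 1)$ is conserved under it.

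Combining the two contributions gives $\chi(C) = d\,\chi(B) - \sum_{p}(n_p-1)$, where the sum may be taken over all of $C$ since unramified points have $n_p = 1$ and contribute nothing. Substituting the genus--Euler-characteristic relations and simplifying produces $g(C) - 1 = d\,(g(B)-1) + \tfrac{1}{2}\sum_{p \in C}(n_p - 1)$, as required; in particular the sum is forced to be even. I expect the bulk of the work to lie in making the deformation-theoretic count at the ramification points rigorous and independent of choices, the covering-space count over the regular locus being routine.
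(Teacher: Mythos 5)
Your proposal is correct and follows essentially the same route as the paper: compose a Morse function on $B$ (with critical points away from the discriminant) with $f$, count $d$ lifts of each index-$\lambda$ critical point over the regular locus, and Morsify each ramification point into $n_p-1$ non-degenerate index-$1$ saddles --- the paper realizes exactly your ``derivative with $n_p-1$ simple zeros'' deformation as $f_t(z)=z^{n_p}-tz$, glued to $f$ by a bump function, with a compactness lemma ruling out new critical points in the gluing annulus. The one point you flag as delicate is handled there precisely by that partition-of-unity patching, so no genuinely different ideas are involved.
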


However, for a higher dimensional manifold, the composite function will have very bad singularities at the preimages of critical points $c^0$ of index $0$ and $c^2$ of index $2$ of $\mathbb{P}_1$. Studying the topology of complex varieties involve three steps. In the first step, we find the index of critical points of $Y$ coming from the Pencil. Followed by understanding the topology of the blow-up variety $Y$. The final step is comparing the topology of $Y$ with that of $X$. 

Let $D_{c^0}$ be a closed real-dimension 2 disc with boundary around $c^0$ that does not contain any critical values of the Pencil and $D_{c^2}$ be a closed real-dimension 2 disc with boundary around $c^0$ that does not contain any critical values of the Pencil. Let $B_0$, $B_2$ be the preimages of $D_{c^0}$, $D_{c^2}$ respectively. We use Morse theory to its full strength to find the relative homology $H_\lambda(Y\symbol{92}\mathring{B_2},B_0)$, where $\mathring{B_2}$ denotes the interior of $B_2$.
Further considering appropriate homology long exact sequences, we prove the following theorem in Section \ref{sec:LHSFT}.

\begin{theorem}[Lefschetz Hyperplane Section Theorem]\label{LHST} The inclusion $X_{c^0}\hookrightarrow X$ induces isomorphisms of the homology groups $H_\lambda(X_{c^0})\to H_\lambda(X)$ for every $\lambda < n-1$ and a surjection for $\lambda =n-1$.
\end{theorem}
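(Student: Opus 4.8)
The plan is to prove the Lefschetz Hyperplane Section Theorem by the strategy outlined in the introduction: transferring information from the Lefschetz pencil to the topology of $X$ through the blow-up $Y$ and a carefully chosen real Morse function on $\mathbb{P}_1$. Throughout, let $n = \dim_{\mathbb{C}} X$, so that the relevant real dimension is $2n$, and let $P : Y \to \mathbb{P}_1$ be the pencil, with $X_{c^0}$ a generic hyperplane section sitting over a regular value. First I would fix a Morse function $\phi : \mathbb{P}_1 \to \mathbb{R}$ on the sphere $\mathbb{P}_1 \cong S^2$ having exactly one minimum $c^0$ (index $0$) and one maximum $c^2$ (index $2$), arranged so that all critical values of $P$ lie in the interior of the cylindrical region $\phi^{-1}([\phi(c^0)+\epsilon, \phi(c^2)-\epsilon])$ and away from the two polar caps $D_{c^0}$ and $D_{c^2}$. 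The composite $\phi \circ P : Y \to \mathbb{R}$ is then a function whose only genuinely bad behaviour occurs over the two caps, exactly as flagged in the introduction.

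The core computation is the relative homology group $H_\lambda(Y \setminus \mathring{B_2}, B_0)$, which I would obtain by applying Morse theory to $\phi \circ P$ on the region $Y \setminus \mathring{B_2}$ relative to $B_0$. Over the cylinder (away from the caps) the function $\phi \circ P$ is a genuine Morse function, and its critical points are precisely the quadratic singularities of the singular fibres of the Lefschetz pencil; the key index count, which I would take from the earlier sections computing the indices of critical points of $Y$ coming from the pencil, shows that each such critical point contributes a cell of real dimension $n$. Consequently $Y \setminus \mathring{B_2}$ is obtained from $B_0$ (which deformation-retracts onto the fibre $X_{c^0}$, up to the blow-up divisor) by attaching cells only in dimension $n$, so that
\[
H_\lambda(Y \setminus \mathring{B_2},\, B_0) = 0 \quad \text{for all } \lambda \neq n.
\]
This vanishing is the linchpin of the whole argument.

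Next I would run the long exact sequence of the pair $(Y \setminus \mathring{B_2}, B_0)$. Since the relative groups vanish outside degree $n$, the inclusion-induced maps $H_\lambda(B_0) \to H_\lambda(Y \setminus \mathring{B_2})$ are isomorphisms for $\lambda < n-1$ and surjective for $\lambda = n-1$. The remaining work is to translate this back to the statement about $X$ and $X_{c^0}$. Here I would use that $B_0$ retracts onto the hyperplane section $X_{c^0}$ and that $Y \setminus \mathring{B_2}$ carries the homotopy type of $X$ with the exceptional cap removed; comparing $Y$ and $X$ requires peeling off the effect of the blow-up along the axis $X'$ and of deleting $B_2$, both of which modify homology only in degrees $\geq n-1$ (the blow-up changes middle-dimensional classes, and $B_2$ contributes a cap of the same index $n$ as $c^2$). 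Threading a second long exact sequence for this comparison then yields isomorphisms $H_\lambda(X_{c^0}) \to H_\lambda(X)$ for $\lambda < n-1$ and surjectivity at $\lambda = n-1$, which is exactly Theorem \ref{LHST}.

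The hard part will be the bookkeeping in the final comparison step: the function $\phi \circ P$ genuinely fails to be Morse over the caps $D_{c^0}$ and $D_{c^2}$ (the preimages of the index-$0$ and index-$2$ critical points of $\mathbb{P}_1$ are whole fibres, not isolated nondegenerate critical points), so one cannot naively apply Morse theory there. I expect the main obstacle to be showing rigorously that excising $\mathring{B_2}$ and accounting for the blow-up divisor together affect only the degrees $\lambda \geq n-1$, so that the clean vanishing obtained over the cylinder survives the passage from $(Y \setminus \mathring{B_2}, B_0)$ back to the pair $(X, X_{c^0})$. This demands a careful identification of the homotopy types of $B_0$ and $B_2$ and control of the connecting homomorphisms in the two exact sequences, which is where the genus and Riemann–Hurwitz techniques developed earlier provide the model for handling the low-dimensional case before bootstrapping to general $n$.
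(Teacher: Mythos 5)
Your first half tracks the paper faithfully: the Morse-theoretic computation that $Y\setminus\mathring{B_2}$ is obtained from $B_0$ by attaching only $n$-cells, giving $H_\lambda(Y\setminus\mathring{B_2},B_0)=0$ for $\lambda\neq n$, is exactly the paper's Theorem \ref{RH}, resting on the same Hessian calculation that the pencil's quadratic singularities have index $n$. The genuine gap is in your comparison step. Your claim that undoing the blow-up and deleting $B_2$ "modify homology only in degrees $\geq n-1$" is false. Since $Y'=X'\times\mathbb{P}_1$ by (\ref{eq:1}), excision and the K\"unneth formula give $H_\lambda(Y'\cup B_0,B_0)\cong H_{\lambda-2}(X')$, which is nonzero already in degree $\lambda=2$ (the class $H_0(X')\otimes H_2(\mathbb{P}_1)$); for $n\geq 4$ this lies strictly below degree $n-1$, so no bookkeeping of degrees can dispose of the blow-up contribution. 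Similarly $\partial B_2\cong X_{c^2}\times S^1$ has homology in a full range of degrees, so regluing $B_2$ is not a "degrees $\geq n-1$ only" operation on absolute homology either. Your plan of passing through the absolute groups $H_\lambda(Y\setminus\mathring{B_2})$ therefore cannot close as stated.

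The missing idea is the paper's induction on dimension, carried out entirely at the level of relative groups. The paper reduces the theorem to $H_\lambda(X,X_{c^0})=0$ for $\lambda\leq n-1$, identifies $H_\lambda(Y,Y'\cup B_0)\cong H_\lambda(X,X_{c^0})$ via the ENR excision result (Theorem \ref{E}), since the blow-down is a homeomorphism off $Y'\cup Y_{c^0}$, and proves $H_\lambda(Y,B_0)\cong H_{\lambda-2}(B_0)\cong H_{\lambda-2}(X_{c^0})$ for $\lambda\leq n-1$ by combining your vanishing result with excision to the pair $(B_2,\partial B_2)\cong X_{c^0}\times(D,S^1)$ and K\"unneth. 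Feeding these into the long exact sequence of the triple $B_0\subset Y'\cup B_0\subset Y$ collapses it to the short exact sequence
\begin{equation*}
0 \longrightarrow H_{\lambda-2}(X') \stackrel{K}{\longrightarrow} H_{\lambda-2}(X_{c^0}) \longrightarrow H_\lambda(X,X_{c^0}) \longrightarrow 0,
\end{equation*}
so the blow-up term $H_{\lambda-2}(X')$ is cancelled not for degree reasons but because $X'=X_{c^0}\cap A$ is itself a smooth hyperplane section of the $(n-1)$-dimensional variety $X_{c^0}$: by the inductive hypothesis --- the very theorem being proved, one dimension down, with curves as the base case --- the map $K$ is surjective in the required range, forcing the cokernel $H_\lambda(X,X_{c^0})$ to vanish. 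Your closing remark about "bootstrapping to general $n$" gestures toward this, but you attribute it to the genus and Riemann--Hurwitz techniques rather than to the Lefschetz theorem itself applied to the pair $(X_{c^0},X')$; without that precise inductive step, and without working relatively so that Theorem \ref{E} can replace $(Y,Y'\cup B_0)$ by $(X,X_{c^0})$, the argument has no mechanism to eliminate the low-degree homology of the exceptional divisor.
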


\subsection*{Acknowledgement:} This work came out of first author's Masters Thesis and is supported partly by Infosys SHE scholarship. The authors would like to thank Amit Hogadi, Krishna Kaipa and Steven Spallone for the encouragement and helpful comments. 
\section{Preliminaries}\label{sec: prelim}
In this section, we state results regarding Morse Theory and Lefschetz Pencil necessary for proving Theorems \ref{genusformulathm}, \ref{RHT} and \ref{LHST}.

Stated below is the main theorem of Morse Theory that describes the role of critical points in general. The proof of it follows from homotopy theory and results about topological surgery. See~\cite{milnor2016morse} and~\cite{audin2014morse}.
\begin{theorem}
\label{morsehomology}
Let $M^r_f:= f^{-1}(-\infty,r]=\{x\in M\mid f(x)\leq r\}$. Suppose there exists $s$ in $Img\  f$ such that $s>r$ and $f^{-1}[r,s]$ is compact,
\begin{enumerate}
\item if $f^{-1}[r,s]$ does not contain any critical point, then $M^s_f$ is homotopic to $M^r_f$. Moreover, $M^{s}_f$ is a deformation retract of $M^{r}_f$. 
\item If $f^{-1}[r, s]$ contains a critical point $c^\lambda_i$ and let $f(c^\lambda_i)=r^\lambda_i$ be the corresponding critical value  then $M^{s}_f$ is homotopic to $M^{r}_f$ attached with a $\lambda$- cell.
\item If  $M^{r^\lambda_i}_f$ is compact $\forall c^\lambda_i \in Crit\ f $ then M has the homotopy type of a C-W complex with one cell of dimension $\lambda$ attached at every critical point $c^\lambda_i$.
\end{enumerate}
\end{theorem}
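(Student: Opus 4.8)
The plan is to follow the classical development of Morse theory (as in~\cite{milnor2016morse}), establishing the three parts in order: parts (1) and (2) are local statements about crossing a regular interval and a single critical level, and part (3) is the global induction assembling them into a CW structure.

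For part (1), I would fix a Riemannian metric on $M$ and form the gradient field $\nabla f$. Since $f^{-1}[r,s]$ contains no critical point, $\nabla f$ is nowhere zero there, so $|\nabla f|^2 > 0$ on a neighbourhood. I would then define a vector field $V = -\rho\,\nabla f / |\nabla f|^2$, where $\rho$ is a smooth cutoff equal to $1$ on $f^{-1}[r,s]$ and compactly supported. Along an integral curve $\varphi_t$ of $V$ one computes $\tfrac{d}{dt} f(\varphi_t(p)) = -1$ wherever $\rho = 1$, so $f$ decreases at unit rate; the compactness of $f^{-1}[r,s]$ guarantees the flow exists for the required time. Then $\varphi_{s-r}$ carries $M^s_f$ diffeomorphically onto $M^r_f$, and the map $(p,t)\mapsto \varphi_{t(s-r)}(p)$ (set to be the identity below level $r$) gives the deformation retraction of $M^s_f$ onto $M^r_f$, yielding the asserted homotopy equivalence.

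For part (2), I would invoke the Morse Lemma (the local normal form quoted in the introduction) to obtain coordinates $(u_1,\dots,u_n)$ near $c^\lambda_i$ in which $f = r^\lambda_i - \sum_{j\le\lambda} u_j^2 + \sum_{j>\lambda} u_j^2$. The cell to be attached is the descending disk $e^\lambda = \{\, \sum_{j>\lambda} u_j^2 = 0,\ \sum_{j\le\lambda} u_j^2 \le \epsilon \,\}$, whose dimension is exactly the index $\lambda$. The core of the argument is Milnor's explicit trick: construct an auxiliary function $F$ agreeing with $f$ outside a small coordinate neighbourhood of $c^\lambda_i$ but pushed strictly below $r^\lambda_i$ near $c^\lambda_i$, arranged so that $F^{-1}(-\infty,\,r^\lambda_i-\epsilon]$ contains both $M^r_f$ and the cell $e^\lambda$ while introducing no new critical points in the relevant region. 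Applying the flow argument of part (1) to $F$ then deformation retracts this sublevel set onto $M^r_f \cup e^\lambda$. This is the technically delicate step, since one must verify the index count fixes the cell dimension and that the retraction maps the region across the critical level precisely onto the cell-attached space.

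For part (3), I would argue by induction on critical values. The compactness hypothesis forces the critical values below any level to be finite and isolated; over each regular interval between consecutive critical levels part (1) supplies a deformation retraction, and across each critical level part (2) attaches one cell per critical point of that level, attaching cells for coincident critical values simultaneously using disjoint coordinate neighbourhoods. Passing to the direct limit (compactness ensures every compact subset lies in a finite stage) and invoking Whitehead's theorem, that a space homotopy equivalent to a CW complex with attaching maps homotopic to cellular ones itself has CW homotopy type, produces the global CW structure carrying one $\lambda$-cell at each index-$\lambda$ critical point. The main obstacle throughout is the bookkeeping in part (2): guaranteeing the modified function $F$ has exactly the intended sublevel-set topology and that the induced deformation retraction onto $M^r_f \cup e^\lambda$ is well defined.
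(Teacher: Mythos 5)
Your proposal is correct and matches the paper's treatment of this result: the paper offers no proof of Theorem \ref{morsehomology} beyond citing \cite{milnor2016morse}, and your three steps (the rescaled gradient flow for regular intervals, Milnor's auxiliary function $F$ attaching the descending $\lambda$-disk, and the induction over critical levels with a direct-limit/Whitehead argument) are precisely the standard proofs of Milnor's Theorems 3.1, 3.2 and 3.5. The only small correction is your retraction formula $(p,t)\mapsto\varphi_{t(s-r)}(p)$: the deformation retraction must be the stopped flow, e.g.\ $(p,t)\mapsto\varphi_{t\,\max(f(p)-r,\,0)}(p)$, so that each point is pushed exactly down to level $r$ rather than flowed for a uniform time, which would overshoot for points near level $r$.
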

From Theorem \ref{morsehomology}, we know that each critical point of index $\lambda$ corresponds to a $\lambda$ cell in the C-W decomposition of $M$. Therefore we make the natural definition,
\[C_\lambda(M):=\langle Crit_\lambda f \rangle\]  where $R$ is an arbitrary ring and $\langle Crit_\lambda f \rangle$ is the free abelian group generated by the set $Crit_\lambda f$. We define $\partial_\lambda$ with respect to the moduli space between the critical points.

For all $ c^\lambda_i\in Crit_\lambda f$ we have
  \[\partial_\lambda:C_\lambda\to C_{\lambda-1}\]
 \vspace{0.25 cm}
 \[ \partial_\lambda(c^\lambda_i)= \sum_{ c^{\lambda-1}_j \in\   Crit_{\lambda-1}f}\# M_f(c^\lambda_i,c^{\lambda-1}_j)c^{\lambda-1}_j\] where $\# M_f(c^\lambda_i,c^{\lambda-1}_j)$ is the signed number of gradient flow lines  between $c^\lambda_i$ and $c^{\lambda-1}_j$. The Morse Theory complex is defined as
\begin{equation}
\label{homologycomplex}
\begin{tikzcd}
0 \arrow[r, "0"] & C_{n}(f,R) \arrow[r, "\partial_n" ] &\cdots \arrow[r]& C_{\lambda}(f,R) \arrow[r, "\partial_\lambda" ]
&\cdots \arrow[r] & C_{0}(f,R) \arrow[r, "\partial_0" ] & 0
\end{tikzcd}
\end{equation}
 This complex is equivalent to the homology complex associated with the C-W Complex because each index $\lambda$ critical point corresponds to a $\lambda$ cell.
\begin{lemma}\label{blemma}
On an $n$ dimensional manifold $M$, the Euler characteristic of $M$ denoted as $e(M)$ is the alternating sum of the number of index $\lambda$- cells for each $0\leq\lambda\leq n$.~\cite{milnor2016morse}
\end{lemma}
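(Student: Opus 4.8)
The plan is to exploit the CW structure that Morse theory already supplies, combined with the purely homological-algebraic fact that the Euler characteristic of a finite chain complex can be read off equally well from its chain groups or from its homology. By Theorem~\ref{morsehomology}(3), $M$ has the homotopy type of a CW complex in which the number of $\lambda$-cells is precisely $c_\lambda := \#Crit_\lambda f$. The Morse complex \eqref{homologycomplex} is the associated cellular chain complex, so each $C_\lambda$ is free of rank $c_\lambda$ and its homology computes $H_\lambda(M)$. The whole statement is therefore the identity $e(M) = \sum_{\lambda=0}^n (-1)^\lambda c_\lambda$, and the content lies in relating the chain ranks to the Betti numbers that define $e(M)$.

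First I would fix the definition $e(M) = \sum_{\lambda=0}^n (-1)^\lambda b_\lambda$, where $b_\lambda := \mathrm{rank}\, H_\lambda(M)$ is the $\lambda$-th Betti number; this is legitimate because $e(M)$ is a homotopy invariant and this formula agrees with the usual one on any finite CW complex. The heart of the argument is then the telescoping identity $\sum_\lambda (-1)^\lambda \mathrm{rank}\, C_\lambda = \sum_\lambda (-1)^\lambda b_\lambda$, which is a formal consequence of rank-nullity applied to the boundary maps of \eqref{homologycomplex}.

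To establish it, write $z_\lambda = \mathrm{rank}(\ker \partial_\lambda)$ and $r_\lambda = \mathrm{rank}(\mathrm{im}\,\partial_\lambda)$. Rank-nullity gives $\mathrm{rank}\, C_\lambda = z_\lambda + r_\lambda$, while the definition $H_\lambda = \ker\partial_\lambda / \mathrm{im}\,\partial_{\lambda+1}$ gives $b_\lambda = z_\lambda - r_{\lambda+1}$ (with the conventions $\partial_0 = 0$, so $z_0 = \mathrm{rank}\,C_0$, and $r_{n+1} = 0$ since there are no cells above dimension $n$). Forming $\sum_\lambda (-1)^\lambda \mathrm{rank}\, C_\lambda = \sum_\lambda (-1)^\lambda (z_\lambda + r_\lambda)$ and comparing with $\sum_\lambda (-1)^\lambda (z_\lambda - r_{\lambda+1})$, the difference is $\sum_\lambda (-1)^\lambda (r_\lambda + r_{\lambda+1})$, and reindexing the second summand by $\mu = \lambda + 1$ shows this vanishes; thus the image ranks cancel in consecutive pairs and the two alternating sums coincide.

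Combining the steps yields $e(M) = \sum_\lambda (-1)^\lambda b_\lambda = \sum_\lambda (-1)^\lambda \mathrm{rank}\, C_\lambda = \sum_\lambda (-1)^\lambda c_\lambda$, which is exactly the asserted alternating sum of the numbers of index-$\lambda$ cells. The only mildly delicate point — and the step I would treat most carefully — is the bookkeeping in the telescoping cancellation together with the boundary conventions at the two ends of the complex; once the CW decomposition furnished by Theorem~\ref{morsehomology} is in hand, the remainder is entirely formal.
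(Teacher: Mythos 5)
Your proof is correct, and it is essentially the standard argument: the paper itself offers no proof of this lemma, deferring entirely to \cite{milnor2016morse}, and the telescoping rank computation you give is precisely the classical derivation found there (Milnor phrases it via the Morse inequalities, of which the Euler characteristic identity is the equality case, but the content is the same chain-level bookkeeping). Your handling of the end conventions ($\partial_0=0$, $r_{n+1}=0$) is right, and the cancellation $\sum_\lambda(-1)^\lambda(r_\lambda+r_{\lambda+1})=0$ goes through as you say. Two small points worth making explicit if this were written out in full: the argument needs $Crit\, f$ finite (guaranteed here since the ambient applications are to compact manifolds, but not literally part of the lemma's hypotheses), and since the complex \eqref{homologycomplex} is over $\mathbb{Z}$ rather than a field, ``rank-nullity'' should be read as additivity of rank over short exact sequences of finitely generated abelian groups, which holds because torsion does not contribute to rank — so $b_\lambda=z_\lambda-r_{\lambda+1}$ is still valid even when $H_\lambda(M)$ has torsion.
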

\begin{lemma}\label{alemma}Given a Morse function $f:M\to \mathbb{R}$, there exist a C-W Complex homotopic to $M$,  and the $\lambda$ cells are critical points of index $\lambda$.
\end{lemma}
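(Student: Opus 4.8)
The plan is to build the CW complex inductively through the sublevel sets $M^r_f$, using the three parts of Theorem \ref{morsehomology} as the engine; indeed this lemma is essentially a repackaging of part (3) of that theorem. Since $f$ is a Morse function, each sublevel set $M^r_f = f^{-1}(-\infty,r]$ is compact, so only finitely many critical points lie below any given level and the critical values form a discrete set, which I enumerate as $r_1 < r_2 < \cdots$. Below the minimum critical value the sublevel set is empty, and I begin the induction there.

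For the inductive step I want to produce, for each $r$, a CW complex $K_r$ together with a homotopy equivalence $K_r \simeq M^r_f$, such that $K_r$ has exactly one $\lambda$-cell for each index-$\lambda$ critical point with critical value $\leq r$. Suppose this is arranged for all $r < r_k$. Choose $s$ with $r_{k-1} < s < r_k$ and $s'$ with $r_k < s' < r_{k+1}$, so that $f^{-1}[s,s']$ is compact and contains precisely the critical points at level $r_k$. Part (1) of Theorem \ref{morsehomology} exhibits $M^s_f$ as a deformation retract of $M^{r_k}_f$, so no new cells appear strictly below $r_k$. Crossing $r_k$, part (2) shows $M^{s'}_f$ is homotopy equivalent to $M^s_f$ with a $\lambda_i$-cell attached for each critical point $c^{\lambda_i}_i$ at level $r_k$ (attaching all such cells simultaneously when several share the value). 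Transporting the attaching maps across the homotopy equivalence $K_s \simeq M^s_f$ and then applying cellular approximation to homotope each attaching map into the appropriate skeleton of $K_s$, I obtain an honest CW complex $K_{s'}$ with the required cells and a homotopy equivalence $K_{s'} \simeq M^{s'}_f$.

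Finally I pass to the union. By construction the inclusions $K_r \hookrightarrow K_{r'}$ for $r < r'$ are inclusions of subcomplexes, so $K := \bigcup_r K_r$ is a genuine CW complex whose cells are in bijection with $Crit\ f$, one $\lambda$-cell per index-$\lambda$ critical point, matching Lemma \ref{blemma} on Euler characteristics. The compatible homotopy equivalences assemble, using $M = \bigcup_r M^r_f$ together with the fact that an increasing union of homotopy equivalences of CW pairs is a homotopy equivalence of the colimits (via Whitehead's theorem applied to the weak equivalence induced on the direct limit), to give $K \simeq M$.

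The main obstacle is this last step: upgrading ``homotopy equivalent at each finite stage'' to ``homotopy equivalent in the limit,'' and the accompanying need to render every attaching map cellular so that $K$ is a literal CW complex rather than a space built by attaching cells only up to homotopy. The cellular-approximation part is routine, but one must check that the approximations chosen at stage $k$ remain compatible with those at stage $k+1$; keeping the $K_r$ nested as subcomplexes is exactly what makes this bookkeeping work. When $M$ is compact the colimit is finite and the limiting issue disappears entirely, so the substantive content reduces to the finite iteration of parts (1) and (2) of Theorem \ref{morsehomology}.
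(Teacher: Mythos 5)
Your proposal is correct and takes essentially the same route as the paper, which gives no argument of its own for Lemma \ref{alemma} and simply cites \cite{milnor2016morse}: your induction over sublevel sets via parts (1) and (2) of Theorem \ref{morsehomology}, the cellular approximation of attaching maps, and the direct-limit/Whitehead step for noncompact $M$ reproduce Milnor's proof of his Theorem 3.5 (with his Lemmas 3.6 and 3.7 invoked implicitly when you ``transport the attaching maps across the homotopy equivalence''). No gaps to flag.
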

For a proof see~\cite{milnor2016morse}.

\begin{proposition}
Let $M$ be a compact, connected and oriented  manifold and $f:M\to \mathbb{R}$ be a Morse function. Let $p : X\to M$ be a finite connected topological cover, then $f':X\to \mathbb{R}$ given by $f'=f\circ p$ is a Morse function and $e(X)=de(M).$
\end{proposition}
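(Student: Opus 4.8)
The plan is to first equip $X$ with the unique smooth structure that makes $p$ a local diffeomorphism. Since $M$ is a smooth manifold and $p$ is a finite topological covering, every point of $M$ has an evenly covered neighbourhood over which $p$ restricts to a homeomorphism on each sheet; pulling back the charts of $M$ through these local homeomorphisms produces a smooth atlas on $X$ for which $p$ is a smooth local diffeomorphism. With this structure $X$ is again a smooth manifold, and being a finite cover of the compact space $M$ it is itself compact. I would then verify the two defining properties of a Morse function for $f'=f\circ p$ separately: non-degeneracy of the critical points, and properness (compactness of sublevel sets).

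For the critical point analysis, fix $x\in X$ and set $c=p(x)$. Because $dp_x$ is an isomorphism, the chain rule $df'_x = df_c\circ dp_x$ shows that $x$ is critical for $f'$ if and only if $c$ is critical for $f$. Hence $Crit\,f' = p^{-1}(Crit\,f)$, and as $p$ is a $d$-sheeted cover every $c\in Crit\,f$ has exactly $d$ preimages. At such a critical point, differentiating once more and using $df_c=0$ yields the Hessian identity $H^{f'}_x = (dp_x)^{t}\,H^{f}_{c}\,(dp_x)$ in the coordinates identified by $p$. Since $dp_x$ is invertible this is a congruence of symmetric matrices, so by Sylvester's law of inertia $H^{f'}_x$ is non-singular exactly when $H^{f}_{c}$ is, and the two matrices have the same number of negative eigenvalues. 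Thus every critical point of $f'$ is non-degenerate, and each index-$\lambda$ critical point $c$ of $f$ lifts to exactly $d$ index-$\lambda$ critical points of $f'$; in particular $\#Crit_\lambda f' = d\cdot \#Crit_\lambda f$ for every $\lambda$.

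For properness, note that $(f')^{-1}(-\infty,r] = p^{-1}\big(f^{-1}(-\infty,r]\big)$ is a closed subset of the compact manifold $X$, hence compact (this also follows directly from properness of the finite covering $p$ together with compactness of $f^{-1}(-\infty,r]$). Therefore $f'$ is a Morse function. Finally, applying Lemma \ref{blemma} together with Lemma \ref{alemma} to both $f$ and $f'$ and using the count from the previous paragraph gives
\[
e(X)=\sum_{\lambda}(-1)^{\lambda}\,\#Crit_\lambda f' = d\sum_{\lambda}(-1)^{\lambda}\,\#Crit_\lambda f = d\,e(M),
\]
as claimed.

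The step I expect to require the most care is the index-preservation argument: one must ensure that the coordinate charts on $X$ and $M$ are precisely the ones matched by the local diffeomorphism $p$, so that the Hessian genuinely transforms by congruence rather than by an unrelated linear map, and only then invoke Sylvester's law to conclude equality of indices. Everything else — fixing the smooth structure, the bijection of critical sets, and compactness — is routine once $p$ is known to be a local diffeomorphism.
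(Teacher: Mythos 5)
Your proposal is correct and follows essentially the same route as the paper: identify $\mathrm{Crit}\,f'$ with $p^{-1}(\mathrm{Crit}\,f)$ via the local diffeomorphism property of $p$, note each index-$\lambda$ critical point lifts to $d$ critical points of the same index, and conclude $e(X)=d\,e(M)$ by the alternating-sum formula of Lemma~\ref{blemma}. You simply supply details the paper leaves implicit (the pulled-back smooth structure, the Hessian congruence $H^{f'}_x=(dp_x)^{t}H^{f}_{c}\,dp_x$ with Sylvester's law, and compactness of sublevel sets), all of which are sound.
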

\begin{proof}A finite connected cover of a compact connected manifold of dimension $n$ is a compact connected topological manifold of dimension $n$. Therfore, $X$ admits a Morse decomposition. Since $p$ is a local isomorphism, critical points of $f'$  of index $\lambda$ are precisely $p^{-1}(Crit\ f)$. We appropriately call $p^{-1}(c_i^\lambda)$ as $a^\lambda_{i_1}, a^\lambda_{i_2},\cdots a^\lambda_{i_d}$. Since there are $d$ copies of each cell, by Lemma \ref{blemma}, $e(X)=de(M)$.
\end{proof}
Let $\mathbb{P}_N$ denote $N$-dimensional complex projective space.  
\begin{defn}Let $X\subset \mathbb{P}_N$ be a smooth irreducible variety.
A \emph{pencil} in $\mathbb{P}_N$
consists of all hyperplanes which contain a fixed $(N-2)$-dimensional projective subspace
$A$, which is called the \emph{axis} of the pencil. This axis is uniquely determined by intersecting any two hyperplanes of the Pencil.
\end{defn}

\begin{defn}Let $X\subset\mathbb{P}_N$ be a closed irreducible variety of dimension $n$. The points of tangential intersection of the pencil $P$ with $X$ are called singular points of the pencil. Such a pencil that only admits finitely many tangential intersections with $X$ and the singular points are quadratic is called a \emph{Lefschetz Pencil}.
\end{defn}
Define \[X_t:=X\cap H_t,\ t\in \mathbb{P}_1\]
such that \[X=\cup_{t\in \mathbb{P}_1}X_t.\]
Let \[X'=X\cap A\] and \[Y=\{ (x, t) \in X\times \mathbb{P}_1 \mid x \in H_t\}.\] The map $p:Y\to X$ is called the blowup map and $Y$ is called the \emph{blowup variety} or \emph{modification} of $X$ along $X'$. Let \begin{equation}\label{eq:1}Y':=p^{-1}(X')=X'\times \mathbb{P}_1\end{equation} and $Y_t:=P^{-1}(t)$. 
Then $P\mid_{Y_t}:Y_t\to X_t$ is a biregular map. Therefore $Y_t$ is isomorphic to $X_t$.
\begin{defn}
The space of all hyperplanes of $\mathbb{P}_N$ which are tangent to $X$ denoted as $\check{X} \subsetneq \check{\mathbb{P}}_N$ is called the \emph{dual variety} of $X$ and is a proper closed subvariety of $\check{P}^N$. 
\end{defn}
\section{Genus Formula for Plane Curves}
\label{sec:extopo}
Equipped with Morse theory, we study the topology of projective varieties, the simplest being curves. 
In this section, we prove Theorem \ref{genusformulathm}

Let $f$ be a degree $d$ irreducible polynomial in homogenous coordinates $(x:y:z)$. We define $C:=V(f)$, the curve defined by $f$ and assume that $C$ is smooth.

\begin{lemma}The Axis of a Pencil in $\mathbb{P}_2$ is a point. We choose a point $p$ as the axis such that $p\notin C$. Let $P:C\to \mathbb{P}_1$ be the Lefschetz pencil whose axis $A$ is $p\notin C$. We choose coordinates $(x:y:z)$ such that $p=(0:0:1)$. Let the projection from the Axis be given by \begin{align*}\pi_p:\mathbb{P}_2\symbol{92}\{p\}&\to \mathbb{P}_1\\(x:y:z)&\mapsto(x:y).\end{align*}\begin{enumerate}
\item $\pi_p$ is well defined 
\item the fiber of $\pi_p$ is the same as the fiber of $P$. Therefore Lefschetz Pencil on the Curve $C$ is exactly the projection map $\pi_p|_C$
\end{enumerate}
\end{lemma}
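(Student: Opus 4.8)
The plan is to unwind the definition of a pencil in the case $N=2$ and then verify the two assertions by an elementary coordinate computation; the essential content is the observation that, since $p\notin C$, the blowup is trivial and $P$ is literally the projection.

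First, for the claim that the axis is a point: by definition a pencil in $\mathbb{P}_N$ is the family of hyperplanes containing a fixed $(N-2)$-dimensional linear subspace, so for $N=2$ the axis has dimension $0$ and is a single point. Because $C\subsetneq\mathbb{P}_2$ is a proper closed subvariety, a general point $p$ lies off $C$, and choosing coordinates with $p=(0:0:1)$ costs nothing.

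Second, for well-definedness of $\pi_p$ I would check the two usual conditions. Scaling $(x,y,z)$ by $\lambda\neq0$ scales $(x,y)$ by the same $\lambda$, so $(x:y)$ is independent of the homogeneous representative; and $(x:y)$ fails to be defined exactly when $x=y=0$, that is, only at $p=(0:0:1)$. Deleting $p$ from the domain removes precisely this indeterminacy locus, so $\pi_p$ is a well-defined morphism on $\mathbb{P}_2\setminus\{p\}$.

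The conceptual heart is the identification of fibers. A line $ax+by+cz=0$ passes through $p=(0:0:1)$ if and only if $c=0$, so the lines through $p$ are exactly $\{ax+by=0\}$ with $(a:b)\in\mathbb{P}_1$; these are the hyperplanes of the pencil, and the fiber of $P$ over $(a:b)$ is $C\cap\{ax+by=0\}$. On the other hand $\pi_p^{-1}(s:t)=\{(x:y:z):tx-sy=0\}\setminus\{p\}$ is the punctured line through $p$ of slope $(s:t)$, so under the identification $(a:b)\leftrightarrow(t:-s)$ of the two copies of $\mathbb{P}_1$ the fibers of $P$ and of $\pi_p|_C$ coincide. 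To finish I would note that the center of the blowup is $X'=C\cap\{p\}=\emptyset$, so $Y=\{(x,t)\in C\times\mathbb{P}_1: x\in H_t\}$ maps isomorphically to $C$ (each $x\neq p$ lies on a unique line through $p$), whence $P$ is exactly $\pi_p|_C$. The only genuine obstacle here is bookkeeping: correctly matching the parameter $(a:b)$ indexing the pencil lines with the image point $(x:y)$ of the projection, which is just the linear identification recorded above.
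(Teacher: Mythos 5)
Your proposal is correct and takes essentially the same route as the paper: an elementary coordinate verification of well-definedness followed by identifying the fibers of $\pi_p$ with the lines through $p$, hence with the hyperplane sections of the pencil. You are in fact somewhat more thorough than the paper's own proof, which leaves implicit the scaling check, the parameter matching between the two copies of $\mathbb{P}_1$, and your useful remark that $X'=C\cap\{p\}=\emptyset$ makes the blowup trivial, so that $P$ is literally $\pi_p|_C$.
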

\begin{proof}
For every $(x:y:z)\neq (0:0:1)$, $\pi_p(x:y:z)=(x:y)\neq (0,0)$. Therefore $\pi_p(x:y:z)=(x:y)\in \mathbb{P}_1$.

For a point $(x_0:y_0)\in \mathbb{P}_1$ the fiber $\pi_p^{-1}(x_0:y_0)=(x_0:y_0:z)=(x_0:y_0:0)+z(0:0:1)$. ie, any point on the subspace $(x_0:y_0:z)$ is given by $\lambda_1(x_0:y_0:0)+\lambda_2z(0:0:1)$ where $\lambda_1,\lambda_2 \in \mathbb{C}$. Therefore $(x_0:y_0:z)$ is the line passing through $(0:0:1)$. Therefore $\pi_p$ corresponds to the Lefschetz pencil whose axis is $(0:0:1)$.
\end{proof}
For any two points $c^0$ and $c^2$ on $\mathbb{P}_1$, by Lemma \ref{alemma} and Reeb's Theorem~\cite{milnor2016morse}[]Theorem 4.1], there exists a Morse function such that
$c^0$ is its index $0$ critical point, $c^1$ is its index $2$ critical point and there are no index $1$ critical points.
\begin{proposition}
Consider the map
\[C\stackrel{\mathrm{{\pi_p|_C}}}{\longrightarrow} {\mathbb{P}_1}\stackrel{\mathrm{\emph{g}}}{\longrightarrow}{\mathbb{R}}\]
where $g$ is a morse function on $\mathbb{P}_1$ with exactly $2$ critical points ,one of index $0$ and other of index $2$ such that the index 0 and index 2 critical points of $g$ are regular values of $\pi_p$. The composite map $(g\circ \pi_p)|_C$ is Morse.
\end{proposition}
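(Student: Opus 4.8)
The plan is to verify the two defining properties of a Morse function for $h := (g\circ \pi_p)|_C$: properness of the sublevel sets and non-degeneracy of every critical point. The first is immediate, since $C$ is a smooth projective curve, hence compact, so each $h^{-1}(-\infty,u]$ is closed in a compact space and therefore compact. The entire content is thus in locating the critical points of $h$ and computing its Hessian at each of them.

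First I would decompose the critical set. Since $C$ and $\mathbb{P}_1$ are both one-dimensional complex manifolds, the chain rule gives $dh_q = dg_{\pi_p(q)}\circ d(\pi_p|_C)_q$ at a point $q\in C$, so $q$ is critical for $h$ precisely when either (i) $q$ is a ramification point of $\pi_p|_C$, i.e. $d(\pi_p|_C)_q=0$, or (ii) $q$ is a regular point of $\pi_p|_C$ while $\pi_p(q)$ is a critical point of $g$, i.e. $\pi_p(q)\in\{c^0,c^2\}$. The hypothesis that $c^0$ and $c^2$ are regular values of $\pi_p|_C$ is exactly what keeps these two cases disjoint: the ramification points of $\pi_p|_C$ all lie over discriminant points, and since $c^0,c^2$ are not discriminant points, every ramification value is a regular point of $g$ (where $dg\neq 0$ because the only critical points of $g$ are $c^0$ and $c^2$).

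For case (ii), over $c^0$ and $c^2$ the map $\pi_p|_C$ is a local biholomorphism (regular value, equal dimensions), so in suitable holomorphic coordinates $h$ agrees with $g$ near each preimage. As $g$ is Morse with a non-degenerate index-$0$ critical point at $c^0$ and index-$2$ at $c^2$, the corresponding critical points of $h$ are non-degenerate of the same indices.

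The heart of the argument, and the step I expect to be the main obstacle, is case (i), where I would invoke the Lefschetz condition that all tangencies are quadratic. In local holomorphic coordinates $z$ on $C$ and $w$ on $\mathbb{P}_1$ centred at $q$ and $\pi_p(q)$, this gives the normal form $w=z^2$. Since $\pi_p(q)$ is a regular point of $g$, the real gradient satisfies $\nabla g=(a,b)\neq(0,0)$ there. Writing $z=x+iy$ so that $w=(x^2-y^2)+2ixy$ and expanding $h=g(x^2-y^2,\,2xy)$, a direct computation shows that both first derivatives vanish at $q$ and that the Hessian is $\left[\begin{smallmatrix} 2a & 2b \\ 2b & -2a \end{smallmatrix}\right]$, with determinant $-4(a^2+b^2)<0$; hence $q$ is a non-degenerate critical point (of index $1$). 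The delicate point is that this non-degeneracy rests essentially on the contact being simple: for a higher-order tangency $w=z^n$ with $n\geq 3$ the same computation yields a vanishing Hessian, so the Lefschetz hypothesis is used in an indispensable way. Having shown that every critical point is non-degenerate and that all sublevel sets are compact, I conclude that $h$ is Morse.
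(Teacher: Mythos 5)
Your proof is correct and takes essentially the same route as the paper: the same chain-rule decomposition of the critical set into ramification points of $\pi_p|_C$ and preimages of $c^0,c^2$, the same local model $z\mapsto z^2$ at ramification points, and the identical Hessian computation giving $\left[\begin{smallmatrix} 2a & 2b \\ 2b & -2a \end{smallmatrix}\right]$ with determinant $-4(a^2+b^2)\neq 0$. Your added checks (compactness of sublevel sets, disjointness of the two cases, and the failure of non-degeneracy for tangencies $w=z^n$ with $n\geq 3$) are correct refinements that the paper leaves implicit.
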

\begin{proof}
The curve $C$ is smooth, therefore locally isomorphic to 1-dimensional Disc. Consider such a disc $D$ around a critical point and let $z=x+iy\in D$. $\pi_p|_D:D\to D$ is given by \begin{equation}z\longrightarrow z^2\end{equation}\begin{equation}x+iy\longrightarrow x^2-y^2+i2xy\end{equation}

Since a Morse function $g$ on $\mathbb{P}_1$ with exactly 2 critical points say $c^0$ and $c^2$, and the Jacobian  has maximal rank on $\mathbb{P}_1\symbol{92}\{c^0\cup c^2\}$ and therefore is of the form \begin{equation}
\label{g}g(X,Y)=aX+bY +\ \text{higher order terms},\  a\neq 0\ or\ b\neq 0\end{equation}Since for a $z\in C$, $\partial (g\circ\pi_p)(z)=\partial g(\pi_p(z))\partial \pi_p(z)$, 
\begin{equation}
\label{critcomp}Crit\ g\circ\pi_p= \pi_p^{-1}(Crit\ g)\cup Crit\ \pi_p\end{equation}

Let us call the index $0$ and index $2$ critical points of $g$ as $c^0$ and $c^2$ respectively. Since $\pi_p$ is a local isomorphism in the neighborhood of a critical point of $g$, $\pi_p^{-1}(c^0)$ are non-degenerate, index $0$ critical points of $(g\circ\pi_p)|_C$. Similarly, $\pi_p^{-1}(c^2)$ are non-degenerate, index $2$ critical points of $(g\circ\pi_p)|_C$. Therefore it is enough to prove that critical point of $\pi_p$ are non-degenerate index 1 critical points of $(g\circ\pi_p)$, i.e, for every $z=x+iy\in Crit\ \pi_p$ the Hessian matrix of $\pi_p$ is non-singular.
\begin{equation}(g\circ \pi_p)(z)=a(x^2-y^2)+2bxy +\ higher\ order\ terms\end{equation}
\[H^{(g\circ \pi_p)}(z)(0)=\begin{pmatrix}
  2a & 2b\\ 
  2b &   -2a\\
\end{pmatrix}\]
\[det(H^{(g\circ \pi_p)}(x+iy)(0))=det\begin{pmatrix}
  2a & 2b\\ 
  
   2b &   -2a\\
\end{pmatrix}=-4a^2-4b^2\]\[-4(a^2+b^2)=0\Leftrightarrow a=0\ and\ b=0\]
Since either $a\neq 0$ or $b\neq 0$,we have that the Hessian matrix of $\pi_p$ is non-singular and critical points of $g\circ \pi_p$ are non-degenerate.
\end{proof}

Consider $\mathbb{P}^2\symbol{92}\{(0:0:1)\}$. This space is defined by $(x:y:z)$ such that $x\neq0$ or $y\neq 0$.  Take $y=1$ and \[(x:1:z)\cong (x:z)\]
\begin{rem}Let $\pi_x:\mathbb{A}^2\to\mathbb{A}^1$ be the projection $(x:z)\to x$. This is exactly $\pi_p|_{y=1}$. Therefore one can easily see that our familiar coordinate projection is a special case of $\pi_p$.\end{rem}

We prove that critical points of $\pi_p$ are exactly index $1$ critical points. This can be easily seen since $\pi_p$ is a local isomorphism and the preimage of local maxima/minima of $g$ would be local maxima/minima of $\pi_p\circ g$. Another proof is by explicitly calculating the eigenvalues.\\The characteristic polynomial of $H^{g\circ \pi_p}(x+iy)$ is given by $(X-2a)(X+2a)-(-2b)^2$.
\[(X-2a)(X+2a)-(-2b)^2=X^2-4a^2-4b^2\]\begin{equation}\label{CritP}X=\pm\sqrt{4a^2+4b^2}\end{equation}
Therefore the number negative eigenvalues of $H^{g\circ \pi_p}$ is $1$.

Define $F(x:z):=f(x:1:z)$. It is seen that degree of $F$ is equal to $d$. Otherwise $y$ would be a factor of $f$ in which case $f$ is not irreducible.
Consider the map \[h:\mathbb{A}^2\to\mathbb{A}^2\]\[(x,z)\to(x,F(x,z))\]
\begin{proposition}
\label{C}
The critical points of $\pi_p|_{C\cap{\mathbb{A}^2}}$ are exactly those of $h$ when restricted to $F(x,z)=0$.
\end{proposition}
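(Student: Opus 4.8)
The plan is to show that both loci named in the statement coincide with the single zero set
\[
S:=\{(x,z)\in\mathbb{A}^2 \mid F(x,z)=0,\ F_z(x,z)=0\},
\]
where $F_z$ denotes $\partial F/\partial z$. Since $h$ is a self-map of the affine plane while $\pi_p|_{C\cap\mathbb{A}^2}$ is the restriction of a projection to a curve, the whole content is to unwind what \emph{critical point} means in each case and to check that both notions reduce to the same pair of equations. Throughout I use that $C$ is smooth, so that at every point of $C\cap\mathbb{A}^2$ one of $F_x,F_z$ is nonzero.

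First I would compute the Jacobian of $h(x,z)=(x,F(x,z))$. One finds
\[
J(h)=\begin{pmatrix} 1 & 0 \\ F_x & F_z \end{pmatrix},\qquad \det J(h)=F_z.
\]
Hence $h$ fails to be a local isomorphism exactly along $\{F_z=0\}$, and the critical points of $h$ that lie on the curve $F(x,z)=0$ are precisely the points of $S$. This is the ``critical points of $h$ restricted to $F(x,z)=0$'' side of the claim.

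Next I would identify the critical points of $\pi_p|_{C\cap\mathbb{A}^2}$. In the chart $y=1$ this is the coordinate projection $(x,z)\mapsto x$ restricted to the smooth curve $C\cap\mathbb{A}^2=\{F=0\}$, so its differential is $dx$ restricted to the tangent line $T_{(x,z)}C=\ker dF$. Smoothness guarantees $(F_x,F_z)\neq(0,0)$, so this tangent line is spanned by the nonzero vector $(F_z,-F_x)$. Evaluating the differential of the projection on this tangent vector gives $dx\,(F_z,-F_x)=F_z$, which vanishes exactly when $F_z=0$. Therefore $\pi_p|_{C\cap\mathbb{A}^2}$ has a critical point precisely at the points of $S$ as well.

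Comparing the two computations yields that both sets equal $S$, which is the assertion of the proposition. I expect the only genuinely delicate point to be the bookkeeping of the two different senses of ``critical'': for the equidimensional map $h$ it is the vanishing of $\det J(h)$, whereas for the restricted projection it is the degeneracy of the differential along the tangent direction to $C$. Both collapse to the equation $F_z=0$ because appending the coordinate $F$ to the projection in order to form $h$ contributes exactly the factor $F_z$ that measures the verticality of the tangent to $C$; smoothness of $C$ is what keeps this consistent, since it forbids $F_x=F_z=0$ simultaneously on $C$ and so ensures that where $F_z=0$ the tangent is genuinely vertical rather than undefined. No appeal to B\'ezout's theorem is needed here — that comes afterward, when $S=\{F=0,\ F_z=0\}$ is counted to obtain the number of index~$1$ critical points in the genus formula.
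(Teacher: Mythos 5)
Your argument is correct, but it takes a genuinely different route from the paper's. The paper's own proof is a one-line fiber comparison: in the chart $y=1$ it writes $h|_C\colon (x,z)\mapsto (x,0)$ and $\pi_p|_{C\cap\mathbb{A}^2}\colon (x:1:z)\mapsto (x:1)$, notes that the fiber over $x_0$ is $\{(x_0,z)\}\cap C$ in both cases, and concludes that the two restricted maps are the same map up to the obvious identification of targets; the Jacobian computation $\det J(h)=\partial F/\partial z$ appears only afterwards, in the proof of Lemma \ref{Pd}. You instead unwind both notions of criticality and show each locus equals $S=\{F=0,\ F_z=0\}$: ambient criticality of the equidimensional map $h$ via $\det J(h)=F_z$, and intrinsic criticality of the restricted projection via $dx$ evaluated on the tangent direction $(F_z,-F_x)$ of $\ker dF$, with smoothness of $C$ (which, via Euler's relation, does give $(F_x,F_z)\neq(0,0)$ on the affine curve) ruling out the degenerate case. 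Your version buys something the paper leaves implicit: as the proposition is actually used in Lemma \ref{Pd}, one needs that points of the curve where the ambient map $h$ drops rank are exactly the critical points of the one-dimensional restricted map, and the paper's fiber argument by itself only identifies the two restricted maps without addressing this mismatch between the two senses of ``critical''; your tangent-line computation closes that gap explicitly, at the cost of a longer argument. Your closing remark that B\'ezout's theorem plays no role here and only enters in the counting step of Lemma \ref{Pd} matches the paper's organization exactly.
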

\begin{proof}
\[h|_C:(x,z)\longrightarrow (x,0)\]
\[(x_0,z)\longleftarrow x_0\]
\[\pi_p|_{C\cap{\mathbb{A}^2}}:(x:1:z)|_{\mathbb{A}^2}\longrightarrow(x:1)\]
\[(x_0,z)\longleftarrow x_0\]
The fibers of both the maps are the same.
\end{proof}
\begin{lemma}\label{Pd}Lefschetz pencil $P$ has exactly $d(d-1)$ critical points.\end{lemma}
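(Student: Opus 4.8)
The plan is to reduce the count of critical points to an intersection number of two plane curves and then invoke Bézout's theorem. By Proposition \ref{C}, the critical points of the pencil $\pi_p|_C$ lying in the affine chart $\{y\neq 0\}$ are exactly the critical points of $h(x,z)=(x,F(x,z))$ constrained to $F=0$. Computing the Jacobian,
\[ J(h)=\begin{pmatrix} 1 & 0 \\ F_x & F_z \end{pmatrix}, \qquad \det J(h)=F_z, \]
so these critical points are precisely the common zeros $\{F=0,\ F_z=0\}$. Geometrically, I would first record the coordinate-free description that makes this global: a point of $C$ is a critical point of $\pi_p$ exactly when its tangent line passes through the axis $p=(0:0:1)$, and since the tangent line at $(a:b:c)$ is $f_x X+f_y Y+f_z Z=0$, this happens iff $f_z(a,b,c)=0$. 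Thus the entire critical set of $\pi_p|_C$ is $V(f)\cap V(f_z)$ in $\mathbb{P}_2$, whose affine part is the zero locus of $F$ and $F_z$ just obtained. Working with the full projective intersection is convenient because it sidesteps a separate discussion of possible critical points on the line at infinity.

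Next I would apply Bézout's theorem to $V(f)$ and $V(f_z)$, which have degrees $d$ and $d-1$ respectively. The hypothesis to verify is that they share no common component: since $C=V(f)$ is smooth and irreducible, $f$ is irreducible, so any common component would have to be $V(f)$ itself; but $\deg f_z=d-1<d$, so $f_z$ cannot be divisible by $f$, and there is no common component. Bézout then yields that $V(f)\cap V(f_z)$ consists of $d(d-1)$ points counted with intersection multiplicity.

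The remaining — and main — point is to show that every intersection point is counted with multiplicity one, so that the weighted total $d(d-1)$ equals the honest number of critical points. Here I would use the Lefschetz hypothesis. At a critical point $P$ the fiber carries a single quadratic singularity, equivalently the ramification of $\pi_p|_C$ at $P$ is simple; in a local holomorphic coordinate $t$ on the smooth curve $C$ the map looks like $t\mapsto t^2$, so $f_z|_C$ vanishes to order exactly one at $P$. Since $C$ is smooth at $P$, the intersection multiplicity of $V(f)$ and $V(f_z)$ at $P$ equals this order of vanishing, namely $1$. Hence all $d(d-1)$ intersection points are distinct and reduced, and $\pi_p|_C$ has exactly $d(d-1)$ critical points. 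I expect the delicate step to be precisely this identification of \emph{``quadratic singularity of the fiber / non-degenerate Hessian''} with \emph{``transverse intersection of $V(f)$ and $V(f_z)$''}, since it is what converts the Bézout count into a point count; the non-degeneracy computation already carried out for the Hessian of $g\circ\pi_p$ is the local input that makes this translation legitimate.
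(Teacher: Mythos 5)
Your proposal takes essentially the same route as the paper: reduce via Proposition \ref{C} to the locus $\{F=0,\ F_z=0\}$ (projectively, $V(f)\cap V(f_z)$) and invoke Bézout to get $d(d-1)$. The paper's proof stops at that point, so your extra verifications — that $f$ irreducible of degree $d$ and $f_z$ of degree $d-1$ can share no component, that the Lefschetz (simple-tangency, local form $t\mapsto t^2$) hypothesis makes $f_z|_C$ vanish to order exactly one so the Bézout count with multiplicity equals the honest number of critical points, and the projective formulation covering possible critical points on the line at infinity — are correct and supply precisely the steps the paper leaves implicit.
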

\begin{proof}
By Proposition \ref{C}, it is enough to calculate the number of critical points of $h|_{F(x:z)=0}$. The Jacobian of $h$ is give by
\[J(h)=\begin{pmatrix}
  \partial x/\partial x & \partial F/\partial x\\ 
  
   \partial x/\partial z &   \partial F/\partial z\\
\end{pmatrix}=\begin{pmatrix}
  1& \partial F/\partial x\\ 
  
   0 &   \partial F/\partial z\\
\end{pmatrix}\] 
\[det(J(h))=det \begin{pmatrix}
  1& \partial F/\partial x\\ 
  
   0 &   \partial F/\partial z\\
\end{pmatrix}=0\Leftrightarrow \partial F/\partial z=0\]
Therefore, the critical points of $h|_C$ is given by the intersection of the two curves $f=0$ and $\partial F/\partial z=0$.
Now by Bézout's theorem, the intersection has $d(d-1)$ points.
\end{proof}
\begin{corollary}
The degree of the dual curve is the intersection number of the dual curve with a Projective line. This translates to the number of lines in the Lefschetz pencil that is tangent to $X$. That is, $Class \ C= degree\ of\ \check{C}$ is equal to $d(d-1)$.\end{corollary}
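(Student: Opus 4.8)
The plan is to exploit projective duality to reinterpret the degree of $\check{C}$ as a count of tangent lines through a generic point, and then to recognize those tangent lines as exactly the singular points of a Lefschetz pencil, so that Lemma \ref{Pd} completes the count. The starting observation is that a line $\ell^*$ in the dual plane $\check{\mathbb{P}}_2$ is, by the standard duality between $\mathbb{P}_2$ and $\check{\mathbb{P}}_2$, the locus of all lines in $\mathbb{P}_2$ passing through a fixed point $p$; that is, $\ell^*$ is precisely the pencil of lines with axis $p$. Consequently a point of $\check{C} \cap \ell^*$ is a line that is simultaneously tangent to $C$ and passes through $p$, so by definition the degree of $\check{C}$ equals the number of tangent lines to $C$ drawn from a generic point $p$.

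Next I would connect these tangent lines to the pencil $P = \pi_p$ with axis $p$. A line through $p$ is a fiber of $\pi_p$, and such a fiber is tangent to $C$ precisely when $\pi_p|_C$ fails to be a local isomorphism there, i.e.\ at a singular point of the pencil. For a Lefschetz pencil each singular fiber carries exactly one quadratic (ordinary) tangency, so the assignment sending a singular point to the fiber through $p$ on which it lies is a bijection between the critical points of $P$ and the tangent lines to $C$ through $p$. Under this bijection the degree of $\check{C}$ equals the number of critical points of the pencil, which is $d(d-1)$ by Lemma \ref{Pd}. Since the class of $C$ is by definition the degree of its dual curve, we obtain $\operatorname{Class}(C) = \deg(\check{C}) = d(d-1)$.

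The genuine content — and the main point to justify carefully — lies in the genericity needed to make the duality count honest. I would choose $p$ (equivalently the line $\ell^*$) outside the finitely many bad configurations: $p$ should avoid $C$ itself, should not lie on any flex tangent or bitangent of $C$, and the pencil with axis $p$ should be a genuine Lefschetz pencil. For such $p$ the intersection $\check{C} \cap \ell^*$ is transverse and reduced, so the intersection number coincides with the naive point count, and every tangent line from $p$ touches $C$ at a single ordinary point. That each tangent line corresponds to exactly one critical point — no line tangent at two points and no higher-order contact — is exactly the Lefschetz condition, and it is this condition that turns the correspondence with Lemma \ref{Pd} into an equality rather than an inequality. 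The existence of a suitable $p$ follows because $\check{C}$ is a proper closed subvariety of $\check{\mathbb{P}}_2$ whose singular locus (duals of bitangents and flexes) is of strictly lower dimension, so its complement is a dense open set from which to select the axis.
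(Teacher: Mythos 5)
Your proposal is correct and follows essentially the same route the paper intends: the corollary is stated as an immediate consequence of Lemma~\ref{Pd}, obtained exactly as you do by identifying points of $\check{C}\cap \ell^*$ (for $\ell^*\subset\check{\mathbb{P}}_2$ the line dual to the axis $p$) with tangent lines to $C$ through $p$, hence with the $d(d-1)$ critical points of the pencil. Your closing paragraph on genericity --- choosing $p$ off $C$, avoiding bitangents and flexes so that the intersection is transverse and the tangency--critical-point correspondence is a bijection --- simply makes explicit what the paper leaves implicit in the Lefschetz-pencil hypothesis.
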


\begin{proof}[\textbf{Proof of Theorem \ref{genusformulathm}}]
It is easily seen that the number of index 0 critical pints is the cardinality of $\pi_p^{-1}(0)$ is equal to the cardinality of $\mathbb{P}_1\cap C$ is equal to $d$. Similarly, for index $2$ critical points. Index one critical points come from $\pi_p$ and by the previous proposition, there is exactly $d(d-1)$ critical points of index $1$.

Therefore, the Homology complex is given by
\[
\begin{tikzcd}
0 \arrow[r, "0"] & C_{2} \arrow[r, "\partial_2" ]\arrow[d, white, "\ovl" black]  \arrow[r]& C_{1} \arrow[r, "\partial_1" ] \arrow[d, white, "\ovl" black] & C_{0} \arrow[r, "\partial_0" ]\arrow[d, white, "\ovl" black] & 0\\
&\mathbb{Z}^d \arrow[transparent, "\partial_n" ] & \mathbb{Z}^{d(d-1)} \arrow[transparent, "\partial_n" ] \arrow[transparent]& \mathbb{Z}^d \arrow[transparent, "\partial_0" ] & 
\end{tikzcd}
\]
\begin{flushleft}
Taking as a matter of fact, that for a connected, compact orientable surface, both $H_0$ and $H_2$ are isomorphic to $\mathbb{Z}$ the following calculation,
\end{flushleft}

\[H_0 \cong \frac{ Ker \ \partial_0}{Im\ \partial_1 } \cong \mathbb{Z} \implies \frac{\mathbb{Z}^d}{Im\ \partial_1 }\cong \mathbb{Z} \implies Im\ \partial_1 \cong \mathbb{Z}^{d-1} \implies Ker \ \partial_1 \cong \mathbb{Z}^{d(d-1)-(d-1)}\]

\[H_2 \cong  Ker \ \partial_2 \cong \mathbb{Z} \implies Im\ \partial_2 \cong \mathbb{Z}^{d-1}\]
\[H_1 \cong \frac{ Ker \ \partial_1}{Im\ \partial_2 } \cong {\mathbb{Z}^{d(d-1)-2(d-1)}}\cong {\mathbb{Z}^{(d-2)(d-1)}}\]

\begin{flushleft}
yields the result , genus of $C$ is given by $g(C)=\frac{1}{2}(d-1)(d-2)$.
\end{flushleft}
\end{proof}

\section{Riemann-Hurwitz Theorem}
\label{sec:Riemann}
In this section, we prove Theorem \ref{RHT}.

The Riemann-Hurwitz Theorem, named after Bernhard Riemann and Adolf Hurwitz, describes the relationship of the genera of two compact Riemann surfaces when one is a ramified covering of the other.  We use deformation Theory explicitly and construct a deformation function with our required properties to fit into the setup of Morse theory. 
\begin{defn}
For a map $f$, where $p$ is any point in $C$, one can choose an isomorphism $\phi_C$ from the open unit disc $D$ to the neighborhood of $p$ and another isomophism $\phi_B$ from $D$ to a neighborhood of $f(p)$ and by open mapping theorem $\phi_B^{-1}\circ f\circ \phi_C \mid_D:D\to D$ is given by $z\mapsto z^{n_p}, n_p\geq 1$, where $n_p$ is the order of vanishing of $f$ at $p$ for $z$ in a neighborhood of $p$. If $n_p=1$, the point is called an \emph{unramified point} of $f$. If $n_p>1$, the point is called a \emph{ramified point} with \emph{ramification index} $n_p$. The \emph{degree} of $f$ is the cardinality of any regular fibre. 
\end{defn}
With abuse of notation, 
we write $f:D\to D$ such that the local coordinates are chosen to satisfy $p=f(p)=0$.

An equivalent way of thinking about ramified point is that $p$ is said to be ramified with respect to $f$ if there exists a small neighborhood $U$ of $p$ such that $f(p)$ has exactly one preimage in $U$, but the image of any other point in $U$ has exactly $n_p$ preimages in $U$.

\begin{proof}[\textbf{Proof of \ref{RHT}}]
 We first prove case where $f$ has only non-degenerate critical points, i.e, $f:D\to D$ is given by $z\mapsto z^{n_p}$ where $n_p\leq2$ for all $p\in C$. 
 
 Let $g$ be a Morse function on $B$ such that critical points of $g$ are regular values of $f$.
Consider the composite map
\[C\stackrel{\mathrm{{f}}}{\longrightarrow} {B}\stackrel{\mathrm{\emph{g}}}{\longrightarrow}{\mathbb{R}}.\]

 Let $l_\lambda^B$, $l_\lambda^C$ denote the number of index $\lambda$-critical points in $B$ and $C$ respectively, and $\mu$ the number of critical points of $f$ then from (\ref{critcomp}) and (\ref{CritP}), we have that
\begin{equation}
\label{ls}
\begin{split}
l_0^C&=d\times l_0^B\\
l_2^C&=d\times l_2^B\\
l_1^C&=d\times l_1^B+\mu
\end{split}
\end{equation}
 Substitituting (\ref{ls}) in $e(C)=l_0^C-l_1^C+l_2^C$,
\begin{equation*}\begin{split}
 e(C)&=(d\times l_0^B)-(d\times l_1^B+\mu)+(d\times l_2^B)\\
 &=d(l_0^B-l_1^B+l_2^B)-\mu   
\end{split}
\end{equation*}

That is,
\[
e(C)=de(B)-\mu
\]

Let $g$ denote the genus of the curve. Substituting $e=2-2g$, we get
\begin{equation}
\label{genus}
g(C)-1=d(g(B)-1)+\frac{1}{2}\mu  \end{equation}

We can quickly verify our genus formula for smooth plane curves of degree $d$ as in Section \ref{sec:extopo} by replacing $B$ in (\ref{genus}) with $\mathbb{P}_1$. It follows from Lemma \ref{Pd} that $\mu=d(d-1)$, where $d$ is the degree of the curve. Therefore,
\begin{equation}
\begin{split}
g(C)-1&=d(g(B)-1)+\frac{1}{2}d(d-1)\\
g(C)&=\frac{1}{2}(d^2-3d+2)\\
&=\frac{1}{2}(d-1)(d-2)
\end{split}
\end{equation}
as desired.

Now, we look at the case where $C$ has degenerate critical points, i.e, $f:D\to D$ is given by $z\mapsto z^{n_p}$, where $n_p>2$. Inorder to make the critical points non-degenerate, we deform the function locally by a small perturbation in the unit disc $D$ centered at $p$.  \[F:D_\frac{1}{2}\times D \to D\times D\] given by 
 \[F(z,t):=(f_t(z),t)\]
  where $D_\frac{1}{2}$ is a disc of radius $\frac{1}{2}$ contained in $D$ and $f_t$ is locally given by
  \[f_t:D_\frac{1}{2}\to D\]
  \begin{equation}
  \label{f}f_t(z):=z^{n_p}-tz.\end{equation}
 \[J^{F}(z,t)=\begin{pmatrix}
  \partial f_t/\partial z & \partial t/\partial z\\ 
  
   \partial f_t/\partial t & \partial t/\partial t\\
\end{pmatrix}=\begin{pmatrix}
  n_pz^{n_p-1}-t& 0\\ 
  
   -z &   1\\
\end{pmatrix}\] 
\[det(J^{F_p})=n_pz^{n_p-1}-t=0\Rightarrow z=\sqrt[\leftroot{-3}\uproot{3}{n_p-1}]{\tfrac{t}{n_p}}\]
Therefore, for a fixed $\epsilon <<\frac{1}{2}$ we choose a $t$ such that $\mid t\mid<n_p\epsilon^{n_p-1}$. Then  $f_{t}(z):D_\frac{1}{2}\to D$ has
$n_p-1$ distinct critical points contained in $D_\epsilon$. And the Hessian of $f_t$ given by \[\begin{split}H^{f_t}(z)&=\frac{\partial^2 f_t}{\partial z^2}\\
&=n_p(n_p-1)z^{n_p-2}\\
&=0\Leftrightarrow z=0
\end{split}\]
But $z=0$ is not a critical point of $f_t$.
Therefore, all the $n_p-1$ critical points of $f_p$ are non-degenerate and clearly contained in $D_\epsilon$.

For the triple $D$, $D_{\epsilon}$, $C\symbol{92}\overline{D_{\frac{1}{2}}}$,
by partitions of unity theorem, there exists a $\mathcal{C}^\infty$ function $\phi:C\to[0,1]$ such that $\phi$ is identically $1$ on $D_{\epsilon}$, $0$ on $C\symbol{92}\overline{D_{\frac{1}{2}}}$ and $0\leq\phi\leq1$ on $\overline{D_{\frac{1}{2}}\symbol{92}D_{\epsilon}}$, the closure of $D_{\frac{1}{2}}\symbol{92}D_{\epsilon}$. Here the closure of $D_{\frac{1}{2}}$, $\overline{D_{\frac{1}{2}}}$ is the compact support of $\phi$.
Recall that $f-f_t=tz$ from (\ref{f}). 

We define \begin{equation}
\label{fone}
\begin{split}f_1&=(1-\phi)f+\phi f_t\\
&=f-\phi(f-f_t)\\
&=f-tz\phi
\end{split}\end{equation}

Since $f_1=f$ on $C\symbol{92}\overline{D_{\frac{1}{2}}}$. Therefore the critical points of $f_1$ are the same as the critical points of $f$ on $C\symbol{92}\overline{D_{\frac{1}{2}}}$. Again, $f_1=f_t$ on $D_\epsilon$. Therefore, the critical points of $f_1$ are the same as the critical points of $f_t$ on $D_\epsilon$. 

By applying the following lemma to $\psi=z\phi$, gives us that $f_1$ does not have any critical point in the region $\overline{D_{\frac{1}{2}}\symbol{92}D_\epsilon}$.

\begin{lemma}
Suppose $f:D\to \mathbb{C}$ be differentiable map that is a local isomorphism (i.e, $f$ has maximal rank everywhere) on $\overline{D_{\frac{1}{2}}\symbol{92}D_{\epsilon}}$. Let $\psi$ be a continuously differentiable function on $D$, then $f+t\psi$ is also a local isomorphism  on  $\overline{D_{\frac{1}{2}}\symbol{92}D_{\epsilon}}$ for sufficiently small $t>0$. 
\end{lemma}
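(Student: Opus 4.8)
The plan is to use compactness of the closed annulus $K := \overline{D_{\frac{1}{2}} \setminus D_\epsilon}$ together with continuity of the Jacobian determinant. Here $f$ and $\psi$ are regarded as smooth maps $\mathbb{R}^2 \to \mathbb{R}^2$ (note that in the application $\psi = z\phi$ is not holomorphic, so one must work with the real Jacobian rather than a complex derivative), and ``maximal rank'' means the $2\times 2$ real Jacobian matrix $J_f$ is invertible. First I would observe that, by hypothesis, the continuous function $x \mapsto |\det J_f(x)|$ is strictly positive at every point of $K$; since $K$ is compact, it attains a positive minimum $m > 0$.

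Next I would expand the perturbed Jacobian. Writing $J_{f + t\psi} = J_f + t\,J_\psi$ and using that the determinant of a $2 \times 2$ matrix is quadratic in its entries,
\[
\det\!\big(J_f + t\,J_\psi\big) = \det J_f + t\,c + t^2 \det J_\psi,
\]
where $c$ is the mixed term assembled from the entries of $J_f$ and $J_\psi$. Because $\psi$ is continuously differentiable, $J_\psi$ is continuous, so on the compact set $K$ both $|c|$ and $|\det J_\psi|$ are bounded, say by $M_1$ and $M_2$. Combining with the lower bound on $|\det J_f|$ gives, for every $x \in K$,
\[
\big|\det J_{f + t\psi}(x)\big| \;\geq\; m - t M_1 - t^2 M_2,
\]
and it then suffices to choose $t > 0$ small enough that $t M_1 + t^2 M_2 < m$. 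For such $t$ the right-hand side is positive, so $\det J_{f + t\psi}$ vanishes nowhere on $K$, i.e.\ $f + t\psi$ is a local isomorphism there.

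The essential point --- and the only place the hypotheses genuinely enter --- is the passage from ``nonvanishing at each point'' to ``bounded away from zero uniformly,'' which is precisely what compactness of $K$ supplies; without it the permissible $t$ could degenerate to $0$ as one approaches the boundary of the annulus. The boundedness of $M_1, M_2$ relies on the same compactness together with the $C^1$ hypothesis on $\psi$, which ensures $J_\psi$ is continuous. No genuinely difficult step arises: the argument is a routine continuity-and-compactness perturbation estimate, and the only care needed is to verify that a single choice of $t$ works simultaneously at every point of $K$.
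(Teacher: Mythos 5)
Your proof is correct, and it reaches the same conclusion as the paper by a more quantitative route. The paper argues softly: it defines the continuous map $\Phi(z,t) = J(f+t\psi)$ from $\overline{D_{\frac{1}{2}}\setminus D_{\epsilon}}\times[0,1]$ into $M_2(\mathbb{R})$, notes that $GL_2(\mathbb{R})$ is open in $M_2(\mathbb{R})$ so $\Phi^{-1}(GL_2(\mathbb{R}))$ is open, observes that it contains the compact slice $\overline{D_{\frac{1}{2}}\setminus D_{\epsilon}}\times\{0\}$, and then extracts a uniform tube $\overline{D_{\frac{1}{2}}\setminus D_{\epsilon}}\times[0,t_0)$ inside it --- a tube-lemma argument that never touches the determinant explicitly. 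You instead expand $\det(J_f + t J_\psi) = \det J_f + tc + t^2 \det J_\psi$ and run a uniform estimate: the minimum $m>0$ of $|\det J_f|$ on the compact annulus against the bounds $M_1, M_2$ on the mixed term and on $\det J_\psi$. Both proofs rest on exactly the same two pillars (continuity of the Jacobian in $(z,t)$ and compactness of the annulus), and both tacitly use that $f$ is $C^1$ so that $J_f$ is continuous; your version buys an explicit admissible threshold ($tM_1 + t^2M_2 < m$), while the paper's buys brevity and immediate generalization to $n\times n$ Jacobians with no computation. Your remark that $\psi = z\phi$ fails to be holomorphic, forcing the real Jacobian, matches the paper's choice to work in $M_2(\mathbb{R})$ and is worth making explicit. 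One small point of care in your write-up: the bound $M_1$ on the mixed term $c$ involves the entries of $J_f$ as well as those of $J_\psi$, so it depends on continuity of $J_f$ on the compact set, not only on the $C^1$ hypothesis for $\psi$ --- this is harmless but should be attributed correctly.
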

\begin{proof}
Consider the map\[\Phi: \overline{D_{\frac{1}{2}}\symbol{92}D_{\epsilon}}\times [0,1]\to M_2(\mathbb{R})\]
\[(z,t)\mapsto J(f+t\psi).\]
Then $\Phi$ is continuous. As $GL_2(\mathbb{R})$ is open in $M_2(\mathbb{R})$,  $\Phi^{-1}(GL_2(\mathbb{R}))$ is open in $\overline{D_{\frac{1}{2}}\symbol{92}D_{\epsilon}}\times [0, 1]$ in the product topology.

Since $J(f)$ has maximal rank, $\overline{D_{\frac{1}{2}}\symbol{92}D_{\epsilon}}\times \{0\}\in \Phi^{-1}(GL_2(\mathbb{R}))$. Now, as $\overline{D_{\frac{1}{2}}\symbol{92}D_{\epsilon}}$ is compact, there exists an open set $\overline{D_{\frac{1}{2}}\symbol{92}D_{\epsilon}}\times [0, t_0)\supset \overline{D_{\frac{1}{2}}\symbol{92}D_{\epsilon}}\times \{0\}$ in $\Phi^{-1}(GL_2(\mathbb{R}))$. Therefore, for every $t<t_0$, $J(f+t\psi)$ has maximal rank.\end{proof}
This leads us to the following theorem.
\begin{theorem}Let $C$ and $B$ be compact connected Riemann surfaces.
 Let $f:C\to B$ be a $\mathcal{C}^\infty$ function that is analytic at every critical point. Assume that $f$ has $n$ degenerate critical points and $n_1$ non-degenerate critical points. Let $p$ be a non degenerate critical point of $f$. Then, there exists a $\mathcal{C}^\infty$ function $f_1:C\to B$ which has $n_p-1$ analytic non-degenerate critical points in a small neighborhood of $p$ and coincides with $f$ outside the neighborhood. Furthermore, there exists a $\mathcal{C}^\infty$ function $f_n:C\to B$ which has $0$ degenerate critical points and $n_1 + \sum_{p\in C}n_p-1$ analytic non-degenerate critical points.
 \end{theorem}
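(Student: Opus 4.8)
The plan is to prove the first assertion by a single local deformation at the degenerate point and then obtain the ``furthermore'' clause by iterating this deformation over the finitely many degenerate critical points. All the analytic ingredients have in fact been assembled in the paragraphs preceding the statement, so the work consists in organizing them and checking that the modification has exactly the advertised effect on the critical set. (I read the hypothesis ``$p$ a non-degenerate critical point'' as a slip for \emph{degenerate}: a ramification point with $n_p=2$ is already non-degenerate and needs no perturbation, whereas the whole point of the construction is to resolve a point with $n_p>2$.)

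First I would fix a degenerate critical point $p$ and choose analytic coordinates on a disc $D\subset C$ about $p$ and on a disc about $f(p)$ in $B$ in which $f$ becomes the model map $z\mapsto z^{n_p}$ with $n_p>2$. I then invoke the perturbed family $f_t(z)=z^{n_p}-tz$ of (\ref{f}). The computation already recorded shows $\partial f_t/\partial z=n_pz^{n_p-1}-t$ vanishes at the $n_p-1$ distinct solutions of $z^{n_p-1}=t/n_p$, while the second derivative $n_p(n_p-1)z^{n_p-2}$ vanishes only at $z=0$, which is not among them; hence for $|t|<n_p\epsilon^{n_p-1}$ the map $f_t$ has exactly $n_p-1$ non-degenerate critical points, all inside $D_\epsilon$, and these are analytic because $f_t$ is a polynomial.

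Next I would glue with the cutoff $\phi$ to form $f_1=f-tz\phi$ as in (\ref{fone}) and split its critical locus into three regions. On $C\setminus\overline{D_{1/2}}$ one has $f_1=f$, so the untouched critical points of $f$ persist there; on $D_\epsilon$ one has $f_1=f_t$, contributing precisely the $n_p-1$ new non-degenerate analytic critical points; and on the annulus $\overline{D_{1/2}\setminus D_\epsilon}$ the model map $z\mapsto z^{n_p}$ is a local isomorphism, since its derivative $n_pz^{n_p-1}$ is nonzero away from $0$, so by the preceding Lemma applied to $\psi=z\phi$ the map $f_1=f-t\psi$ is still a local isomorphism for small $t$ and contributes no critical points. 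This establishes the first assertion: $f_1$ coincides with $f$ outside the disc and replaces $p$ by $n_p-1$ non-degenerate analytic critical points.

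For the second assertion I would iterate. Since $f$ is non-constant and analytic at each critical point, its critical set is discrete and therefore finite on the compact surface $C$; I list the degenerate points $p_1,\dots,p_n$ and shrink the discs $D(p_j)$ so they are pairwise disjoint. Because each deformation is supported in $\overline{D_{1/2}(p_j)}$, carrying out the construction at every $p_j$ yields a single $\mathcal{C}^\infty$ map $f_n:C\to B$ in which the modifications do not interfere: each $p_j$ is traded for $n_{p_j}-1$ non-degenerate analytic critical points while the original $n_1$ non-degenerate ones survive, giving the total $n_1+\sum_{p}(n_p-1)$ summed over the degenerate points and leaving no degenerate critical point. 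The one place requiring genuine care---and the main obstacle---is verifying that the perturbed map still takes values in $B$; this is where one must keep the target coordinate disc large enough (equivalently $t$ small enough) that $f_1(\overline{D_{1/2}})$ remains inside it, which holds because $f_1\to f$ uniformly as $t\to 0$.
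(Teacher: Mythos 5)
Your proposal is correct and takes essentially the same route as the paper: the explicit local deformation $f_t(z)=z^{n_p}-tz$ glued via the cutoff $\phi$ into $f_1$ as in (\ref{fone}), the annulus lemma applied with $\psi=z\phi$ to rule out new critical points on $\overline{D_{\frac{1}{2}}\symbol{92}D_{\epsilon}}$, and then repetition over the finitely many degenerate points (the paper inducts sequentially through $f_2,\dots,f_n$, while you carry out the disjointly supported deformations simultaneously---an immaterial difference). Your two side remarks, that ``non-degenerate'' in the hypothesis should read \emph{degenerate} and that $t$ must be taken small enough for $f_1$ to keep its values in the target coordinate disc of $B$, are correct refinements of points the paper leaves implicit.
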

 \begin{proof}
  The first part of the proof follows from our explicit construction of the $\mathcal{C}^\infty$ function $f_1$ in (\ref{fone}) and subsequent verifications. Now we apply this to $f_1$. Suppose $q$ is a degenerate critical point of $f_1$ then,  there exists a $\mathcal{C}^\infty$ function $f_2$ which has $n-2$ degenerate critical points and $n_1+(n_p-1)+(n_q-1)$ non degenerate critical points.
  Inductively construct $f_3,\cdots ,f_n$. Note that $f_n$ has 0 degenerate critical points and \[n_1 +\sum_{p\ \text{degenerate}}(n_p-1)\] non degenerate index $1$ critical points.
  Since $n_p=2$ for a non degenerate critical point,
\begin{equation}
\sum_{p\ \text{non-degenerate}}(n_p-1)=\sum_{p\ \text{non-degenerate}}1=n_1.\end{equation}
 Also note that for a regular point $n_p=1$ and $\sum_{p\ \text{regular}}(n_p-1)=0$.
 Therefore, $f_n$ has 
 \[n_1 +\sum_{p\ \text{degenerate}}(n_p-1)=\sum_{p\in C} (n_p-1)\] non-degenerate critical points.
\end{proof}
\textbf\emph{{Proof of Proposition (\ref{RHT}) continued.}} Let $D$ be the neighborhood of a critical point of $f_n.$
   The function $f_n\mid_D:D\to D$ is given by $z\mapsto z^2$. Choose a Morse function $g:B\to \mathbb{R}$ such that critical points of $g$ are regular values of $f_n$. The composite map $g\circ f_n\mid_D:D\to \mathbb{R}$ is holomorphic and has non degenerate critical points. From (\ref{CritP}) the non degenerate critical points of $f_n$ have index 1. If $l_\lambda^B$, $l_\lambda^C$ denote the number of index $\lambda$-critical points in $B$ and $C$ of $f$ and $g\circ f$ respectively, then from (\ref{critcomp}),  
\begin{equation}\begin{split}
\label{lsfinal}l_0^C&=d\times l_0^B\\
l_2^C&=d\times l_2^B\\
l_1^C&=d\times l_1^B+\sum_p(n_p-1)
\end{split}
\end{equation}
Substituting (\ref{lsfinal}) to $e(C)=l_0^C-l_1^C+l_2^C$, 
\begin{equation*}\begin{split}
 e(C)&=(d\times l_0^B)-(d\times l_1^B+\sum_p(n_p-1))+(d\times l_2^B)\\
 &=d(l_0^B-l_1^B+l_2^B)-\sum_p(n_p-1)   
\end{split}
\end{equation*}
That is,
\begin{equation}
e(C)=de(B)-\sum_p(n_p-1)
\end{equation}

Let $g$ denote the genus of the curve. Substituting $e=2-2g$ in (\ref{genus}), we get
\begin{equation}g(C)-1=d(g(B)-1)+\frac{1}{2}\sum_{p\in C}(n_p-1)\end{equation}
\end{proof}

\section{Lefschetz Hyperplane Section Theorem}
\label{sec:LHSFT}
In the previous two sections, we completed the study of pencils in dimension 1 and in fact, generalized to any map between curves. Here we study the topology of complex projective varieties, give an illustration of the maps and prove Theorem \ref{LHST}

\subsection{Non Degeneracy of Critical Points of the Pencil}
Let $X$ be a smooth variety of complex dimension $n>1$ and $Y$ it's blow up along the smooth axis $X'$. Let $g$ be a morse function on $\mathbb{P}_1$ with exactly 1 index-$0$ and 1 index-$2$ critical points which are regular values of the Lefschetz pencil $P$ from $Y$ to $\mathbb{P}_1$. From (\ref{g}), $g$ is given by $g(x,y)=ax+by +\ \text{higher order terms}$, where $a\neq 0\ or\ b\neq 0$.
For a sufficiently small neighborhood $D$ of a critical point $c$, 
\[P\mid_D:D\to D\] is given by\[(z_1,z_2,...z_n)\mapsto\sum_{i=1}^n z_i^2\]
\[(x_1,iy_1,\cdots ,x_n,iy_n)\mapsto \sum_{i=1}^n x_i^2-y_i^2+i2x_iy_i=(\sum_{i=1}^n x_i^2-y_i^2,\sum_{i=1}^n2x_iy_i)\]
 Consider the composite map
\[Y\stackrel{\mathrm{{P}}}{\longrightarrow} {\mathbb{P}_1}\stackrel{\mathrm{\emph{g}}}{\longrightarrow}{\mathbb{R}}\]
given by
\[(g\circ P)\mid_D(x_1+iy_1,\cdots ,x_n +iy_n)=g(\sum_{i=1}^n x_i^2-y_i^2,\sum_{i=1}^n2x_iy_i)=a\sum_{i=1}^n(x_1^2-y_1^2)+2b\sum_{i=1}^n(x_1y_1)+O(x,y).\] 
%
Now, the Hessian matrix of $g\circ P\mid_D$ at a critical point is given by
\begin{align*}\begin{split}
\sbox0{$\begin{matrix}a& &0\\ &\ddots& \\0& &+a\end{matrix}$}
\sbox1{$\begin{matrix}+b& &0\\ &\ddots& \\0& &+b\end{matrix}$}
\sbox2{$\begin{matrix}-a& &0\\ &\ddots& \\0& &-a\end{matrix}$}
\sbox3{$\begin{matrix}0& &0\\ &\ddots& \\0& &0\end{matrix}$}
\sbox4{$\begin{matrix}\frac{-(a^2+b^2)}{a}& &0\\ &\ddots& \\0& &\frac{-(a^2+b^2)}{a}\end{matrix}$}
H^{(g\circ P)\mid_D}(0)&= 
\sbox0{$\begin{matrix}\frac{\partial^2 (g\circ P)}{\partial x_1^2}& \cdots& \frac{\partial^2 (g\circ P)}{\partial x_1x_n}\\ 
\vdots& \ddots& \vdots \\ \frac{\partial^2 (g\circ P)}{\partial x_nx_1}& \cdots& \frac{\partial^2 (g\circ P)}{\partial x_n^2}\end{matrix}$}
\sbox1{$\begin{matrix}\frac{\partial^2 (g\circ P)}{\partial x_1 \partial y_1}& \cdots& \frac{\partial^2 (g\circ P)}{\partial x_1\partial y_n}\\ 
\vdots& \ddots& \vdots \\ \frac{\partial^2 (g\circ P)}{\partial x_ny_1}& \cdots& \frac{\partial^2 (g\circ P)}{\partial x_n\partial y_n}\end{matrix}$}
\sbox2{$\begin{matrix}\frac{\partial^2 (g\circ P)}{\partial x_1^2}& \cdots& \frac{\partial^2 (g\circ P)}{\partial x_1x_n}\\ 
\vdots& \ddots& \vdots \\ \frac{\partial^2 (g\circ P)}{\partial x_nx_1}& \cdots& \frac{\partial^2 (g\circ P)}{\partial x_n^2}\end{matrix}$}
\sbox3{$\begin{matrix}\frac{\partial^2 (g\circ P)}{\partial y_1^2}& \cdots& \frac{\partial^2 (g\circ P)}{\partial y_1x_n}\\ 
\vdots& \ddots& \vdots \\ \frac{\partial^2 (g\circ P)}{\partial y_nx_1}& \cdots& \frac{\partial^2 (g\circ P)}{\partial y_n \partial x_n}\end{matrix}$}
\sbox4{$\begin{matrix}\frac{\partial^2 (g\circ P)}{\partial y_1^2}& \cdots& \frac{\partial^2 (g\circ P)}{\partial y_1y_n}\\ 
\vdots& \ddots& \vdots \\ \frac{\partial^2 (g\circ P)}{\partial y_ny_1}& \cdots& \frac{\partial^2 (g\circ P)}{\partial y_n^2}\end{matrix}$}
\left[
\begin{array}{c|c}
\usebox{0}&\usebox{1}\\
\hline
  \vphantom{\usebox{1}}\usebox{3}&\usebox{4}
\end{array}
\right]\\
&=
\sbox0{$\begin{matrix}+2a& &0\\ &\ddots& \\ 0& &+2a\end{matrix}$}
\sbox1{$\begin{matrix}+2b& &0\\  &\ddots& \\0& &+2b\end{matrix}$}
\sbox2{$\begin{matrix}-2a& &0\\ &\ddots& \\ 0& &-2a\end{matrix}$}
\sbox3{$\begin{matrix}0& &0\\ &\ddots& \\0& &0\end{matrix}$}
\sbox4{$\begin{matrix}\frac{-(a^2+b^2)}{a}& &0\\ &\ddots& \\0& &\frac{-(a^2+b^2)}{a}\end{matrix}$}
\left[
\begin{array}{c|c}
\usebox{0}&\usebox{1}\\
\hline
  \vphantom{\usebox{1}}\usebox{1}&\usebox{2}
\end{array}
\right]\end{split}\end{align*}
where each block is an $n\times n$ diagonal  matrix with 0 in every $ij$-th entry for $i\neq j$.
\begin{equation}
\sbox0{$\begin{matrix}+a& &0\\ &\ddots& \\0& &+a\end{matrix}$}
\sbox1{$\begin{matrix}+b& &0\\ &\ddots& \\0& &+b\end{matrix}$}
\sbox2{$\begin{matrix}-a& &0\\ &\ddots& \\0& &-a\end{matrix}$}
H^{(g\circ P)\mid_D}(0)=2\left[
\begin{array}{c|c}
\usebox{0}&\usebox{1}\\
\hline
  \vphantom{\usebox{1}}\usebox{1}&\usebox{2}
\end{array}
\right]
\end{equation}
Let $a\neq 0$.
We reduce the matrix to row echelon form using the the row transformation
\[R_{n+1}\mapsto R_{n+1}-\frac{b}{a}R_{1}.\]
\[\sbox0{$\begin{matrix}+a& &0\\ &\ddots& \\0& &+a\end{matrix}$}
\sbox1{$\begin{matrix}+b& &0\\ &\ddots& \\0& &+b\end{matrix}$}
\sbox2{$\begin{matrix}-a& &0\\ &\ddots& \\0& &-a\end{matrix}$}
\sbox3{$\begin{matrix}0& &0\\ &\ddots& \\0& &0\end{matrix}$}
\sbox4{$\begin{matrix}\frac{-(a^2+b^2)}{a}& &0\\ &\ddots& \\0& &\frac{-(a^2+b^2)}{a}\end{matrix}$}
\left[
\begin{array}{c|c}
\usebox{0}&\usebox{1}\\
\hline
  \vphantom{\usebox{1}}\usebox{1}&\usebox{2}
\end{array}
\right]\mapsto \left[
\begin{array}{c|c}
\usebox{0}&\usebox{1}\\
\hline
  \vphantom{\usebox{1}}\usebox{3}&\usebox{4}
\end{array}
\right]
\]
and the determinant of $H^{(g\circ P)\mid_D}$ at a critical point, given by
\begin{align}\begin{split}
\sbox0{$\begin{matrix}+a& &0\\ &\ddots& \\0& &+a\end{matrix}$}
\sbox1{$\begin{matrix}+b& &0\\ &\ddots& \\0& &+b\end{matrix}$}
\sbox2{$\begin{matrix}-a& &0\\ &\ddots& \\0& &-a\end{matrix}$}
\sbox3{$\begin{matrix}0& &0\\ &\ddots& \\0& &0\end{matrix}$}
\sbox4{$\begin{matrix}\frac{-(a^2+b^2)}{a}& &0\\ &\ddots& \\0& &\frac{-(a^2+b^2)}{a}\end{matrix}$}
Det\left[
\begin{array}{c|c}
\usebox{0}&\usebox{1}\\
\hline
  \vphantom{\usebox{1}}\usebox{1}&\usebox{2}
\end{array}
\right]&= 
\sbox0{$\begin{matrix}+a& &0\\ &\ddots& \\0& &+a\end{matrix}$}
\sbox1{$\begin{matrix}+b& &0\\ &\ddots& \\0& &+b\end{matrix}$}
\sbox2{$\begin{matrix}-a& &0\\ &\ddots& \\0& &-a\end{matrix}$}
\sbox3{$\begin{matrix}0& &0\\ &\ddots& \\0& &0\end{matrix}$}
\sbox4{$\begin{matrix}\frac{-(a^2+b^2)}{a}& &0\\ &\ddots& \\0& &\frac{-(a^2+b^2)}{a}\end{matrix}$}
Det \left[
\begin{array}{c|c}
\usebox{0}&\usebox{1}\\
\hline
  \vphantom{\usebox{1}}\usebox{3}&\usebox{4}
\end{array}
\right]\\
&=(-1)^n\frac{(a^2+b^2)^n}{a^n}a^n\\
&=(-1)^n(a^2+b^2)^n\\
&\neq0
\end{split}
\end{align}

Therefore, the critical points of $g\circ P$, which are the critical points of the pencil $P$ are nondegenerate.

The characteristic polynomial of 
$H^{(g\circ P)\mid_D}$ at a critical point is given by
\begin{align}\begin{split}
\sbox0{$\begin{matrix}a-x& &0\\ &\ddots& \\0& &a-x\end{matrix}$}
\sbox1{$\begin{matrix}+b& &0\\ &\ddots& \\0& &+b\end{matrix}$}
\sbox2{$\begin{matrix}-a-x& &0\\ &\ddots& \\0& &-a-x\end{matrix}$}
\sbox3{$\begin{matrix}0& &0\\ &\ddots& \\0& &0\end{matrix}$}
\sbox4{$\begin{matrix}\frac{-(a^2+b^2)}{a}& &0\\ &\ddots& \\0& &\frac{-(a^2+b^2)}{a}\end{matrix}$}
Det\left[
\begin{array}{c|c}
\usebox{0}&\usebox{1}\\
\hline
  \vphantom{\usebox{1}}\usebox{1}&\usebox{2}
\end{array}
\right]&= (x^2-(a^2+b^2))^n
\end{split}
\end{align}
Therefore the eigenvalues of $H^{(g\circ P)\mid_D(0)}$ are $\sqrt{a^2+b^2}$ and $-\sqrt{a^2+b^2}$, with algebraic multiplicity $n$ each. That is, all the critical points of $g\circ P$ arising from the critical points of $P$ have index $n$.

However, the preimages of the index 0  and index $2$ critical points $c^0$ and $c^2$ of $g$, are hyperplane sections each. Hence the function $g\circ P$ has non-isolated critical points on $Y$.

\subsection{Lefschetz Hyperplane Section Theorem}
\label{sec:Lefschetz}
Let $X$ be an n-dimensional smooth variety and $Y$ it's blowup along $X'$. Let $P:Y\to \mathbb{P}^1$ be the Lefschetz Pencil. Let $D_{c^0}$ be a closed real-dimension 2 disc with boundary around $c^0$ that does not contain any critical values of the Pencil, explicitly, if $y_1,y_2\cdots y_\mu$ are all the critical points of $P$, then choose an $\epsilon< (g\circ P)(y_i)$, for all $i\in\{1,2,\cdots,\mu\}$ and $D_{c^0}$ is defined as $g^{-1}(-\infty,\epsilon]$. For a point $p\in \mathbb{P}^1$, the fiber of $P$ at $p$ is defined as $Y_p=P^{-1}(p)$. Let $B_0$ be the preimage of $D_{c^0}$. Since $D_{c^0}$ is closed and does not contain any critical values of the Pencil, we have that preimage $P^{-1}(D_{c^0})$ is closed and does not contain any critical points of the Pencil. 

Similarly there exists a closed real-dimension 2 disc, $D_{c^2}=g^{-1}[1-\epsilon,\infty)$ containing $c^2$ which does not contain any critical points of the pencil $P$ and let $B_2=P^{-1}({D}_{c^2})$.

FIGURE \ref{fig}. is an illustration of the map
$Y\stackrel{\mathrm{{P}}}{\longrightarrow} {\mathbb{P}_1}\stackrel{\mathrm{\emph{g}}}{\longrightarrow}{\mathbb{R}}$
\begin{figure}[h]
\centerline{\includegraphics{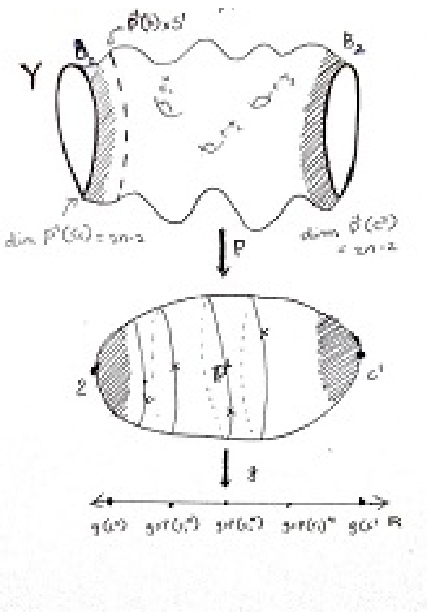} }
\caption{}
\label{fig}
\end{figure}
\begin{lemma}Let $B_0$, $B_2$ be the preimages of $D_{c^0}$, $D_{c^2}$ respectively. Let $\partial B_0$, $\partial B_2$ denote the boundary of $B_0$ and $B_2$, then
\[B_0\cong P^{-1}(c^0)\times D_{c^0}\cong P^{-1}(c^2)\times D_{c^2}\cong B_2\]and
\[\partial B_0 \cong P^{-1}(c^0)\times S^1\cong P^{-1}(c^2)\times S^1\cong \partial B_2,\]
\end{lemma}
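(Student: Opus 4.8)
The plan is to deduce everything from Ehresmann's fibration theorem, which is the tool flagged in the keywords. First I would observe that $Y$, being the blow-up of the projective variety $X$ along the smooth center $X'$, is itself compact; hence the pencil $P:Y\to\mathbb{P}_1$ is proper. By construction $D_{c^0}=g^{-1}(-\infty,\epsilon]$ was chosen to contain no critical value of $P$, so the restriction
\[
P|_{B_0}:B_0=P^{-1}(D_{c^0})\longrightarrow D_{c^0}
\]
is a proper submersion onto the $2$-disc $D_{c^0}$. Ehresmann's theorem then gives that $P|_{B_0}$ is a locally trivial fibre bundle whose fibre may be taken to be $P^{-1}(c^0)$.

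Next I would use that $D_{c^0}$ is a disc, hence contractible and paracompact, so that every locally trivial fibre bundle over it is globally trivial. This yields a diffeomorphism $B_0\cong P^{-1}(c^0)\times D_{c^0}$. Running the identical argument over $D_{c^2}=g^{-1}[1-\epsilon,\infty)$, which likewise avoids the critical values of $P$, gives $B_2\cong P^{-1}(c^2)\times D_{c^2}$.

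It remains to splice these two products together. Since $c^0$ and $c^2$ are both regular values of $P$, and the set of regular values is $\mathbb{P}_1$ minus the finitely many critical values of the Lefschetz pencil and therefore connected, Ehresmann's theorem applied over this connected open set shows that all regular fibres are mutually diffeomorphic; in particular $P^{-1}(c^0)\cong P^{-1}(c^2)$. Combined with the evident diffeomorphism $D_{c^0}\cong D_{c^2}$ of discs, this gives the full chain $B_0\cong P^{-1}(c^0)\times D_{c^0}\cong P^{-1}(c^2)\times D_{c^2}\cong B_2$. For the boundary assertion I would simply restrict the trivialisation of $B_0$ to the boundary circle: since $\partial D_{c^0}=S^1$ lies in the regular locus, $\partial B_0=P^{-1}(\partial D_{c^0})$ inherits the product structure $P^{-1}(c^0)\times S^1$, and likewise $\partial B_2\cong P^{-1}(c^2)\times S^1$, which together with the fibre diffeomorphism yields the second chain.

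The genuinely substantive step is verifying the hypotheses of Ehresmann's theorem, namely that $P$ is proper (via compactness of $Y$) and a submersion over each disc (via the fact that the discs were engineered to miss the critical values of $P$). Everything downstream is formal: triviality over a contractible base and connectedness of the regular locus are standard, and the boundary statement is a mere restriction of the trivialisation. Thus the main care lies in correctly setting up the fibration hypotheses rather than in any hard computation.
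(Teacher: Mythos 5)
Your proposal is correct and follows essentially the same route as the paper: Ehresmann's fibration theorem applied over each disc (which avoids the critical values by construction), global triviality of the bundle over the contractible base, and restriction of the trivialisation to the boundary circle to get $\partial B_0\cong P^{-1}(c^0)\times S^1$. If anything you are slightly more careful than the paper, which leaves implicit both the properness of $P$ and the middle diffeomorphism $P^{-1}(c^0)\cong P^{-1}(c^2)$ that you justify via connectedness of the regular locus in $\mathbb{P}_1$.
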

\begin{proof}
Since $D_{c^0}$ does not contain any critical points of the Pencil, by Ehresmann's fibration Theorem, $P$ fibers $B_0$ locally trivially. Therefore, \[B_0\cong P^{-1}(D_{c^0})\cong P^{-1}({c^0})\times D_{c^0}\]
Let $b_1\in \partial D_{c^0}$. By Ehresmann's fibration Theorem,
 \[P^{-1}(\partial D_{c^0})\cong P^{-1}(b_1)\times S^1.\] Therefore,
\[
\partial(B_0)\cong \partial (P^{-1}(D_{c^0}))\cong  (P^{-1}(\partial D_{c^0}))\cong P^{-1}(b_1)\times S^1.
\]
Since there does not exist any critical points of the Pencil in $D_{c^0}$, $P^{-1}(b_1)\cong P^{-1}({c^0})$ and
\begin{equation*}\partial(B_0)\cong P^{-1}({c^0})\times S^1.\end{equation*}
Similarly for $B_2$ and $\partial B_2$.
\end{proof}

Since $B_0$ does not contain any critical points of $P$, $B_0$ is homotopic to $Y_{c^0}$, similarly $Y_{c^2}$ is homotopic to $B_2$.
\begin{theorem}
\label{RH}
Let $H_\lambda(Y\symbol{92}\mathring{B_2},B_0)$ denote the homology of $Y\symbol{92}\mathring{B_2}$ relative to $B_0$ and $H_\lambda(Y\symbol{92}\mathring{B_0},B_2)$ denote the homology of $Y\symbol{92}\mathring{B_0}$ relative to $B_2$, then \[H_\lambda(Y\symbol{92}\mathring{B_2},B_0)\cong H_\lambda(Y\symbol{92}\mathring{B_0},B_2)=\begin{cases}
\mathbb{Z}^\mu  &\quad \lambda=n\\
0  &\quad \text{otherwise} \\
\end{cases}\]where $\mu$ is the number of critical points of the Pencil.
\end{theorem}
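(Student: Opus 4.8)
The plan is to view $\phi := g \circ P \colon Y \to \mathbb{R}$ as a function that is a genuine Morse function everywhere except along the two special fibres $Y_{c^0} = P^{-1}(c^0)$ and $Y_{c^2} = P^{-1}(c^2)$, and to run the relative Morse theory of Theorem \ref{morsehomology} on the region lying strictly between them. Normalizing $g$ so that it takes the value $0$ at $c^0$ and $1$ at $c^2$, we have $\phi(Y) = [0,1]$ with $\phi^{-1}(0) = Y_{c^0}$ and $\phi^{-1}(1) = Y_{c^2}$; in the notation $M^r_\phi := \phi^{-1}(-\infty,r]$ of Section \ref{sec: prelim} this gives $B_0 = M^{\epsilon}_\phi$ and $W := Y \setminus \mathring{B_2} = M^{1-\epsilon}_\phi$. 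By the preceding subsection the critical set of $\phi$ consists of the non-isolated locus along $Y_{c^0} \cup Y_{c^2}$, which sits at the levels $\phi = 0$ and $\phi = 1$ and is therefore contained in $B_0 \cup B_2$, together with the $\mu$ critical points of the pencil, each non-degenerate of index $n$. Choosing $\epsilon$ below every pencil critical value, $\phi$ restricted to $\phi^{-1}[\epsilon, 1-\epsilon]$ is an honest Morse function on a compact region whose only critical points are these $\mu$ points of index $n$, with $\epsilon$ and $1-\epsilon$ regular values.

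First I would let $r$ increase from $\epsilon$ to $1-\epsilon$, subdividing the interval so that each step crosses at most one critical value. Part (1) of Theorem \ref{morsehomology} shows the homotopy type is unchanged across regular intervals, while part (2) attaches an $n$-cell each time we cross a pencil critical point. Since $Y$ is compact all the slabs $\phi^{-1}[r,s]$ are compact, so the theorem applies verbatim, and we conclude that $W$ is homotopy equivalent, relative to $B_0$, to $B_0$ with exactly $\mu$ cells of dimension $n$ attached and no relative cells in any other dimension. The relative cellular chain complex $C_\ast(W, B_0)$ is thus concentrated in degree $n$, where it is free of rank $\mu$; all relative boundary maps vanish for lack of neighbouring cells, and hence
\[
H_\lambda\bigl(Y \setminus \mathring{B_2},\, B_0\bigr) \;=\; \begin{cases} \mathbb{Z}^\mu & \lambda = n,\\[2pt] 0 & \text{otherwise.}\end{cases}
\]

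For the second pair I would run the identical argument with $\psi := 1 - \phi$, which is again Morse off $Y_{c^0} \cup Y_{c^2}$ and satisfies $B_2 = M^{\epsilon}_\psi$ and $Y \setminus \mathring{B_0} = M^{1-\epsilon}_\psi$. Because $\dim_{\mathbb{R}} Y = 2n$, each pencil critical point has index $2n - n = n$ for $\psi$ just as for $\phi$, so building $Y \setminus \mathring{B_0}$ up from $B_2$ attaches exactly $\mu$ cells of dimension $n$ and yields the same answer $\mathbb{Z}^\mu$ in degree $n$ and $0$ elsewhere. The asserted isomorphism is then immediate, both sides being this common group.

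The hard part is the bookkeeping that isolates where $\phi$ fails to be Morse: the essential point is that the \emph{non-isolated} critical fibres occur exactly at the extreme levels $0$ and $1$, so they are swept into $B_0$ and $B_2$ and never interfere with the handle attachments in the open cobordism $\phi^{-1}(\epsilon, 1-\epsilon)$, where Theorem \ref{morsehomology} applies unchanged. The only other point needing care is the index count under $\phi \mapsto 1-\phi$, which is precisely where the even real dimension $2n$ forces the index to remain $n$.
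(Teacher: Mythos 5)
Your proposal is correct and follows essentially the same route as the paper: both identify $B_0$ and $Y\setminus\mathring{B_2}$ as the sublevel sets $(g\circ P)^{-1}(-\infty,\epsilon]$ and $(g\circ P)^{-1}(-\infty,1-\epsilon]$, invoke Theorem \ref{morsehomology} to attach the $\mu$ index-$n$ cells coming from the pencil's critical points, and handle the second pair by reversing the Morse function (the paper writes $-g\circ P$ where you use $1-\phi$, a cosmetic difference). Your additional remarks --- that the non-isolated critical fibres are confined to the extreme levels and so lie inside $B_0\cup B_2$, and that the index stays $n$ under reversal because $\dim_{\mathbb{R}}Y=2n$ --- are points the paper leaves implicit, and they tighten rather than change the argument.
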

\begin{proof}
Since $Y\symbol{92}\mathring{B_2}$ is the manifold $Y_{g\circ P}^{1-\epsilon}=(g\circ P)^{-1}(-\infty,1-\epsilon]$ and $B_0$ is the manifold $Y_{g\circ P}^\epsilon=(g\circ P)^{-1}(-\infty,\epsilon]$, from \ref{morsehomology}, $Y\symbol{92}\mathring{B_2}$ is homotopic to $B_0$ with $\mu$ number of n-cells attached.
 Therefore,
 \[H_\lambda(Y\symbol{92}\mathring{B_2},B_0)= \begin{cases}
      \mathbb{Z}^\mu  &\quad \lambda=n\\
     
      0  &\quad \text{otherwise} \\
     \end{cases}\]
Similarly, $Y\symbol{92}\mathring{B_0}$ is the manifold $Y_{-g\circ P}^{1-\epsilon}=(-g\circ P)^{-1}(-\infty,1-\epsilon]$ and $B_2$ is the manifold $Y_{-g\circ P}^\epsilon=(-g\circ P)^{-1}(-\infty,\epsilon]$ and from Morse Theory~\cite{milnor2016morse}, $Y\symbol{92}\mathring{B_0}$ is homotopic to $B_2$ with $\mu$ number of n-cells attached. Therefore,
\[H_\lambda(Y\symbol{92}\mathring{B_0},B_2)= \begin{cases}
      \mathbb{Z}^\mu  &\quad \lambda=n\\
     
      0  &\quad \text{otherwise} \\
     \end{cases}\]
\end{proof}
\begin{theorem}Recall that $Y$ is the blow-up variety of $X$ of dimension $n$ and $B_0=P^{-1}(D_{c^0})$. Then,
$H_{\lambda+1}(Y,B_0)\cong H_{\lambda-1}(B_0)$, for $\lambda \leq n$.
\end{theorem}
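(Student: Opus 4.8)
The plan is to factor the pair $(Y,B_0)$ through the intermediate space $Y\setminus\mathring{B_2}$, trading the relative homology of $(Y,B_0)$ for that of $(B_2,\partial B_2)$, whose product structure is exactly what produces the shift of the degree by $2$. First I would record that $B_0=P^{-1}(D_{c^0})$ is contained in $P^{-1}(\mathbb{P}^1\setminus\mathring{D_{c^2}})=Y\setminus\mathring{B_2}$, since $D_{c^0}$ and $D_{c^2}$ are disjoint discs around $c^0$ and $c^2$. This gives the chain of inclusions $B_0\subset Y\setminus\mathring{B_2}\subset Y$, whose long exact sequence of the triple reads
\[
\cdots\to H_{k}(Y\setminus\mathring{B_2},B_0)\to H_k(Y,B_0)\to H_k(Y,Y\setminus\mathring{B_2})\xrightarrow{\partial}H_{k-1}(Y\setminus\mathring{B_2},B_0)\to\cdots
\]
By Theorem \ref{RH} the outer groups $H_*(Y\setminus\mathring{B_2},B_0)$ are $\mathbb{Z}^\mu$ in degree $n$ and vanish in every other degree. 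Hence, taking $k=\lambda+1$, whenever both neighbours vanish the middle arrow is an isomorphism $H_{\lambda+1}(Y,B_0)\cong H_{\lambda+1}(Y,Y\setminus\mathring{B_2})$.

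Next I would evaluate $H_{\lambda+1}(Y,Y\setminus\mathring{B_2})$ geometrically. Excising the complement of a collar of $\partial B_2$ gives $H_{\lambda+1}(Y,Y\setminus\mathring{B_2})\cong H_{\lambda+1}(B_2,\partial B_2)$. By the lemma preceding Theorem \ref{RH} we have $B_2\cong P^{-1}(c^2)\times D_{c^2}\cong Y_{c^2}\times D^2$ and $\partial B_2\cong Y_{c^2}\times S^1$, so the pair is $Y_{c^2}\times(D^2,S^1)$. Since $H_j(D^2,S^1)$ is $\mathbb{Z}$ for $j=2$ and $0$ otherwise, and is free, the Künneth theorem for pairs yields $H_{\lambda+1}(B_2,\partial B_2)\cong H_{\lambda-1}(Y_{c^2})$. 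Finally, because $c^0$ and $c^2$ are regular values of $P$ lying in the connected set obtained from $\mathbb{P}^1$ by deleting the finitely many critical values, Ehresmann's theorem makes $Y_{c^2}$ diffeomorphic to $Y_{c^0}$, and $Y_{c^0}$ is a deformation retract of $B_0$; thus $H_{\lambda-1}(Y_{c^2})\cong H_{\lambda-1}(B_0)$. Concatenating the isomorphisms gives $H_{\lambda+1}(Y,B_0)\cong H_{\lambda-1}(B_0)$.

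I expect the real obstacle to be the triple-sequence step rather than the product computation, which is routine once the lemma and Künneth are in hand. The isomorphism $H_{\lambda+1}(Y,B_0)\cong H_{\lambda+1}(Y,Y\setminus\mathring{B_2})$ is immediate only when the two adjacent copies of $H_*(Y\setminus\mathring{B_2},B_0)$ vanish, that is for $\lambda+1\le n-1$. In the top degrees $\lambda=n-1$ and $\lambda=n$ the group $\mathbb{Z}^\mu$ coming from the $\mu$ attached $n$-cells (the Lefschetz thimbles of the pencil) enters the sequence, and one must argue separately that the connecting homomorphisms carry its classes so as not to disturb the isomorphism. Pinning down the behaviour of these thimble classes in $H_*(Y,B_0)$ up to $\lambda=n$ is the delicate point, and it is precisely here that the dimension hypothesis $\lambda\le n$ is doing its work.
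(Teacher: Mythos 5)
Your proposal follows essentially the same route as the paper's proof: the long exact sequence of the triple $B_0\subset Y\setminus\mathring{B_2}\subset Y$, the vanishing of $H_*(Y\setminus\mathring{B_2},B_0)$ outside degree $n$ from Theorem \ref{RH}, and excision plus Ehresmann plus K\"unneth applied to the product pair $(\text{fibre}\times D^2,\ \text{fibre}\times S^1)$ --- the only cosmetic difference being that you excise to $(B_2,\partial B_2)$ and transport along $Y_{c^2}\cong Y_{c^0}$, while the paper passes directly to $(B_0,\partial B_0)$. The top-degree caveat you flag is real and is shared by the paper itself: its proof derives $H_\lambda(Y,B_0)\cong H_{\lambda-2}(B_0)$ only for $\lambda<n$ (in degrees $n$ and $n+1$ the group $\mathbb{Z}^\mu$ enters and the sequence yields only a surjection, respectively an injection), so the stated range $\lambda\le n$ outruns the argument, though the weaker range is all that the subsequent proof of the Lefschetz theorem actually uses.
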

\begin{proof} 
Consider the homology long exact sequence of the triple given by $B_0\subset Y\symbol{92}\mathring{B} \subset Y$,
\begin{equation}\label{AYminuB}
\begin{tikzcd}
\cdots \arrow[r] & {H_{\lambda}(Y\symbol{92}\mathring{B_2},B_0)}\arrow[r] & {H_{\lambda}(Y,B_0)} \arrow[r, "" ] & {H_{\lambda}(Y,Y\symbol{92}\mathring{B_2})}\arrow[r, "" ]  &  {H_{\lambda-1}(Y\symbol{92}\mathring{B_2},B_0)}\arrow[r]& \cdots\\
\end{tikzcd}
\end{equation}
We have that \begin{equation}\label {eq:7}
\begin{split}
H_\lambda(Y, Y\symbol{92}\mathring{B_2})&\cong H_\lambda(B_0, \partial B_0)\cong H_\lambda(X_{c^0}\times D_1, X_{c^0}\times S^1)\\
&\cong H_{\lambda}(X_{c^0}\times (D_1, S^1))\\
&\cong \bigoplus\limits_{i}H_i(X_{c^0})\otimes H_{\lambda-i}(D_1, S^1)\\
&\cong H_{\lambda-2}(X_{c^0})\\
&\cong H_{\lambda-2}(B_0).
\end{split}
\end{equation}
The first isomorphism is through excision property. Since $B_0$ do not have any critical points of the pencil, Ehresmann's fibration theorem gives us the second isomorphism 
An application of the Künneth formula gives the fourth isomorphism and the final isomorphism follows from the following fact that
\begin{equation}H_q(D_1,S^1)=
\begin{cases}R  &\quad q=2\\     
     0  &\quad \text{otherwise} \\
    \end{cases}
\end{equation}
Substituting $(\ref{eq:7})$ and Theorem \ref{RH} in (\ref{AYminuB}), we have for $\lambda< n$
\begin{equation*}\label {Eq: 82}
\begin{tikzcd}
{0}\arrow[r] & {H_{\lambda}(Y,B_0)} \arrow[r, "" ] & {H_{\lambda-2}(B_0)}\arrow[r, "" ]  &  {0}\\
\end{tikzcd}\end{equation*}
Therefore,
\begin{equation}
\label{YrelA}
H_\lambda(Y,B_0)\cong H_{\lambda-2}(B_0)
\end{equation}
\end{proof}
\begin{theorem}
\label{E}
Let $\nu$ be a continuous mapping between pairs of compact Euclidean
neighborhood retracts, such that $\nu:X\symbol{92}X_1 \to Y\symbol{92}Y_1$ is a homeomorphism, then $\nu$ induces an isomorphism $\nu_{\lambda}:H_\lambda(X,X_1) \to H_\lambda(Y,Y_1)$.
\end{theorem}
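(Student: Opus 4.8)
The plan is to reduce the assertion to the elementary fact that a homeomorphism induces isomorphisms on reduced homology, by collapsing the two subspaces to points. The key structural input is that, for a pair of compact Euclidean neighbourhood retracts, the relative homology is computed by the reduced homology of the quotient; the hypothesis that $\nu$ restricts to a homeomorphism off the subspaces is then exactly what is needed to make the induced map of quotients a homeomorphism.

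First I would record that since $X$ and $Y$ are compact Hausdorff and $X_1\subseteq X$, $Y_1\subseteq Y$ are compact, the subspaces $X_1, Y_1$ are closed. Because $X, X_1$ (resp. $Y, Y_1$) are ENRs and $X_1$ is a closed sub-ENR of $X$, the pair $(X,X_1)$ is a good pair: the inclusion $X_1\hookrightarrow X$ is a cofibration, and consequently the collapsing map induces an isomorphism
\[
H_\lambda(X,X_1)\ \xrightarrow{\ \cong\ }\ \tilde H_\lambda(X/X_1),
\]
and likewise $H_\lambda(Y,Y_1)\cong\tilde H_\lambda(Y/Y_1)$. Next, since $\nu$ is a map of pairs it sends $X_1$ into $Y_1$ and therefore descends to a basepoint-preserving continuous map $\bar\nu\colon X/X_1\to Y/Y_1$. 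I claim $\bar\nu$ is a homeomorphism: it is a continuous map from a compact space to a Hausdorff space (the quotient of a compact Hausdorff space by a closed subset is Hausdorff), so it suffices to check that it is a bijection. As $\nu|_{X\setminus X_1}\colon X\setminus X_1\to Y\setminus Y_1$ is a bijection onto the complement, one has $\nu^{-1}(Y_1)=X_1$; hence $\bar\nu$ restricts to a bijection on the complements of the two collapsed points and carries collapsed point to collapsed point, so it is bijective, and therefore a homeomorphism. Finally $\bar\nu$ induces an isomorphism on $\tilde H_\lambda$, and by naturality of the good-pair isomorphisms the composite
\[
H_\lambda(X,X_1)\cong\tilde H_\lambda(X/X_1)\xrightarrow{\ \bar\nu_*\ }\tilde H_\lambda(Y/Y_1)\cong H_\lambda(Y,Y_1)
\]
is exactly $\nu_\lambda$, which is therefore an isomorphism.

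The main obstacle is the good-pair isomorphism $H_\lambda(X,X_1)\cong\tilde H_\lambda(X/X_1)$: this is where the Euclidean-neighbourhood-retract hypothesis is essential and cannot be dropped. It rests on ANR theory, a compact ENR being an ANR, and a closed sub-ANR $X_1\subseteq X$ admitting an open neighbourhood that deformation retracts onto it, so that $X_1\hookrightarrow X$ has the homotopy extension property. An equivalent and perhaps more transparent route to the same point is to observe that, $X_1$ being closed in the compact space $X$, the quotient $X/X_1$ is the one-point compactification of the locally compact space $X\setminus X_1$; the homeomorphism $\nu|_{X\setminus X_1}$ is automatically proper and hence extends to a homeomorphism of one-point compactifications, yielding $\bar\nu$ directly. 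Either way, once the good-pair identification and the homeomorphism $\bar\nu$ are in hand the conclusion is immediate, and the only care needed is the verification of the neighbourhood-deformation-retract property from the ENR structure together with the bookkeeping that $\nu^{-1}(Y_1)=X_1$.
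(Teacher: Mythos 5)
Your proof is correct, and it takes a genuinely different route from the paper's. The paper disposes of the statement in two lines by the classical excision scheme (essentially Dold's argument): by the neighbourhood theorem for ENRs, $X_1$ is taut in $X$, i.e.\ it admits arbitrarily small neighbourhoods $U$ that deformation retract onto it, so
\[
H_\lambda(X,X_1)\cong H_\lambda(X,U)\cong H_\lambda\bigl(X\setminus X_1,\,U\setminus X_1\bigr)
\]
by excision; passing to the direct limit over such neighbourhoods (this is where ``homology commutes with direct limit'' enters) expresses $H_\lambda(X,X_1)$ purely in terms of the complement, where $\nu$ is a homeomorphism, and compactness guarantees that $\nu$ matches the two directed systems of neighbourhoods (given $U\supseteq X_1$ one needs some $V\supseteq Y_1$ with $\nu^{-1}(V)\subseteq U$, which uses that $\nu$ is a closed map). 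You instead collapse the subspaces: the ENR hypothesis is consumed through the cofibration property of the closed sub-ANR inclusion $X_1\hookrightarrow X$, giving the good-pair isomorphism $H_\lambda(X,X_1)\cong\tilde H_\lambda(X/X_1)$, and your bookkeeping $\nu^{-1}(Y_1)=X_1$ is exactly right (it follows from $\nu(X_1)\subseteq Y_1$ together with $\nu(X\setminus X_1)=Y\setminus Y_1$), making $\bar\nu$ a continuous bijection from a compact space to a Hausdorff space and hence a homeomorphism. The two approaches spend the ENR hypothesis on equivalent facts --- tautness of neighbourhoods versus the homotopy extension property for closed ANR pairs --- but they are organized differently: your quotient argument is complete and self-contained as written, and your one-point-compactification remark is particularly clean, since it constructs $\bar\nu$ with no ANR input at all (though the good-pair identification still needs it); the paper's excision/direct-limit sketch never forms a quotient and so avoids all point-set verifications about $X/X_1$, at the cost of the neighbourhood-cofinality bookkeeping above, which the paper leaves entirely implicit. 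Note also that your continuous-bijection step genuinely requires compactness, whereas the excision scheme adapts more readily to proper maps of non-compact ENR pairs.
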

\begin{proof}
This can be proved using the Neighborhood theorem and excision property. Homology commutes with direct limit.
\end{proof}
Since we deal with manifolds, ENR property follows.   
From what we have about the homology of the modification, we are good to start finding the homology of the variety $X$ itself.

\begin{proof}[\textbf{Proof of \ref{LHST}}]
From the long exact sequence of the pair $X_{c^0}, X$ given by
\begin{equation*}
\begin{tikzcd}
\cdots \arrow[r] & {H_{\lambda +1}(X,X_{c^0})}\arrow[r] & {H_{\lambda}(X_{c^0})}\arrow[r, ""] &
{H_{\lambda}(X)}\arrow[r] & {H_{\lambda}(X,X_{c^0})} \arrow[r]& 
\cdots
\end{tikzcd}
\end{equation*} we get that the statement of the theorem is equivalent to saying that $H_\lambda(X, X_{c^0})=0$ for $\lambda \leq n-1.$
\\

Consider the homology long exact sequence of the triple $B_0\subset Y'\cup B_0\subset Y$, 
\begin{equation}\label{impl}
\begin{tikzcd}
\cdots \arrow[r] & {H_{\lambda}(Y'\cup B_0,B_0)}\arrow[r] & {H_{\lambda}(Y,B_0)} \arrow[r, "" ] & {H_{\lambda}(Y,Y'\cup B_0)}\arrow[r, "" ]  &  {H_{\lambda-1}(Y'\cup B_0,B_0)}\arrow[r]& \cdots\\
\end{tikzcd}
\end{equation}
Consider our third term of the sequence, 
\begin{equation}
\label{H}
\begin{split}
 H_\lambda(Y, Y'\cup B_0)&\cong H_\lambda(Y, Y'\cup Y_{c^0})
 \end{split}
\end{equation}
\\

Since $Y\symbol{92}(Y'\cup Y_{c^0})\cong X\symbol{92}X_{c^0}$, by applying Theorem \ref{H} in (\ref{H}),
\begin{equation}
\label{YYstickA}
\begin{split}
 H_\lambda(Y, Y'\cup B_0)&\cong H_\lambda(X, X_{c^0})
 \end{split}
\end{equation}
We aim to prove this term is zero. We already know from (\ref{YrelA}) that the second term $H_{\lambda}(Y,B_0)\cong H_{\lambda-2}(B_0)$. And for the first term
\begin{equation*}
\begin{split}
  H_\lambda(Y'\cup B_0,B_0)&\cong H_\lambda(Y',X'),\quad \text{by theorem \ref{E}}\\
  &\cong H_\lambda(X'\times \mathbb{P}^1,X')\\
\end{split}
\end{equation*}
By Künneth Formula, 
\begin{equation}
\label{blowax}
\begin{split}
  H_\lambda(X'\times \mathbb{P}^1,X')&\cong H_\lambda(X'\times (\mathbb{P}^1,1))\\
  &\cong \bigoplus_iH_i(X')\otimes H_{\lambda-i}(\mathbb{P}^1)\\
  &\cong H_{\lambda-2}(X')
\end{split}
\end{equation}
Substituting (\ref{YrelA}), (\ref{YYstickA}), (\ref{blowax}) in (\ref{impl}), we get 
\begin{equation}
\label{K}
\begin{tikzcd}
0 \arrow[r] & {H_{\lambda-2}(X')}\arrow[r, "K"] &
{H_{\lambda-2}(X_{c^0})}\arrow[r] & {H_{\lambda}(X,X_{c^0})} \arrow[r]& 
0
\end{tikzcd}
\end{equation}
Since $X'=X_{c^0}\cap B_0$ is a smooth hyperplane section of $X_{c^0}$, $K$ is an isomorphism  by induction and therefore, by the exactness of the sequence (\ref{K})
\begin{equation}H_{\lambda}(X,X_{c^0})=0,\quad \text{for $\lambda \leq n-1$}\end{equation}
\end{proof}

\bibliography{abc}{}
\bibliographystyle{siam}

\end{document}